\def\co{\colon\thinspace}
\DeclareMathAlphabet{\mathsfsl}{OT1}{cmss}{m}{sl}
\newtheorem{thm}{Theorem}[section]
\newtheorem{lem}[thm]{Lemma}
\newtheorem{cor}[thm]{Corollary}
\newtheorem{prop}[thm]{Proposition}
\newtheorem{conj}[thm]{Conjecture}
\theoremstyle{definition}
\newtheorem{defn}[thm]{Definition}
\newtheorem{rem}[thm]{Remark}
\newcommand{\on}{\operatorname}
\newcommand{\Spinc}{\on{Spin}^c}
\newcommand{\Z}{\mathbb{Z}}
\newcommand\goth[1]{\mathfrak{#1}}
\newcommand{\s}{\goth{s}}
\begin{document}

\title{Cosmetic surgeries on knots in $S^3$}

\author{{\large Yi NI} and {\large Zhongtao WU}\\{\normalsize Department of Mathematics, Caltech, MC 253-37}\\
{\normalsize 1200 E California Blvd, Pasadena, CA
91125}\\{\small\it Emai\/l\/:\quad\rm yini@caltech.edu\quad zhongtao@caltech.edu}}

\date{}
\maketitle

\begin{abstract}
Two Dehn surgeries on a knot are called {\it purely cosmetic}, if they yield manifolds that are homeomorphic as oriented manifolds. Suppose there exist purely cosmetic surgeries
on a knot in $S^3$,
we show that the two surgery slopes must be the opposite of each other. One ingredient of our proof is a Dehn surgery formula for correction terms in Heegaard Floer homology.
\end{abstract}

\section{Introduction}

Given a knot $K$ in a three-manifold $Y$, let $\alpha,\beta$ be two different slopes on $K$, and let $Y_{\alpha}(K),Y_{\beta}(K)$ be the manifolds obtained by $\alpha$-- and $\beta$--surgeries on $K$, respectively.
If $Y_{\alpha}(K),Y_{\beta}(K)$ are homeomorphic, then we say the two surgeries are {\it cosmetic}; if $Y_{\alpha}(K)\cong Y_{\beta}(K)$ as oriented manifolds, then these two surgeries are {\it purely cosmetic};
if $Y_{\alpha}(K)\cong -Y_{\beta}(K)$ as oriented manifolds, then these two surgeries are {\it chirally cosmetic}.

Chirally cosmetic surgeries occur frequently for knots in $S^3$. For example, if $K$ is amphicheiral, then $S^3_{r}(K)\cong-S^3_{-r}(K)$ for any slope $r$. If $T$ is the right hand trefoil knot, then
$S^3_{(18k+9)/(3k+1)}(T)\cong-S^3_{(18k+9)/(3k+2)}(T)$
for any nonnegative integer $k$ \cite{Mathieu}.

On the other hand, purely cosmetic surgeries are very rare.  In fact, the following conjecture was proposed in Gordon's ICM talk \cite[Conjecture~6.1]{GordonICM} and Kirby's Problem List \cite[Problem~1.81 A]{Kirby}.

\begin{conj}[Cosmetic Surgery Conjecture]\label{conj:Cosmetic}
Suppose $K$ is a knot in a closed oriented three-manifold $Y$ such that $Y-K$ is irreducible and not homeomorphic to the solid torus. If two different Dehn surgeries on $K$ are purely cosmetic, then there is a homeomorphism of $Y-K$ which takes one slope to the other.
\end{conj}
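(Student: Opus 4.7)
The plan is to prove the conjecture by combining Heegaard Floer obstructions with geometric and topological restrictions, stratifying the problem according to the topology of $Y - K$ and reducing to a manageable residual case.

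Step 1 (Geometric reduction). I would first dispose of the non-hyperbolic knot complements. When $Y - K$ is Seifert-fibered or toroidal (including torus, cable, and more general satellite knots in $S^3$), classical Dehn-surgery tools --- the Cyclic Surgery Theorem, the Cabling Conjecture, and the work of Boyer-Zhang on exceptional surgeries --- already force distinct slopes to yield non-homeomorphic (even non-oriented-homeomorphic) manifolds under the hypothesis that $Y - K$ is not a solid torus. This reduces the conjecture to the case that $Y - K$ is hyperbolic.

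Step 2 (Floer-theoretic restrictions). Assume $Y_\alpha(K)\cong Y_\beta(K)$ as oriented manifolds. I would compare $HF^+(Y_\alpha(K))$ and $HF^+(Y_\beta(K))$ through the Ozs\'vath-Szab\'o surgery exact triangle and the mapping cone formula. In the model case $Y=S^3$ one extracts in succession: (a) $|p|=|p'|$ from $|H_1|$; (b) the Boyer-Lines condition $\Delta''_K(1)=0$ from Casson-Walker invariants; (c) the slope relation $q=-q'$ from matching correction terms across corresponding $\Spinc$ structures; and (d) $\tau(K)=0$ plus strong symmetry of the $V_i$ invariants from finer analysis of the mapping cone. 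For general $Y$, one uses the relative version of the mapping cone over a rational homology sphere, noting that the absence of a canonical $\Spinc$ structure forces working throughout with $\Spinc$-equivariant data.

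Step 3 (Ruling out the residual cases). The remaining task is to eliminate every candidate pair $(K,p/q)$ with $p/q$ and $-p/q$ yielding the same oriented manifold under all the constraints above. I would attempt this through three complementary tools: (i) Thurston's hyperbolic Dehn surgery theorem, which bounds exceptional slopes and gives asymptotic control on the hyperbolic geometry (and hence the volume) of the surgered manifolds; (ii) the involutive Heegaard Floer invariants $\bar{d}$ and $\underline{d}$ of Hendricks-Manolescu, which refine the correction terms; and (iii) the genus-and-fiberedness detection theorems for $\HFK$, used to pin down the topology of $K$ from the symmetric Alexander-polynomial structure produced in Step 2.

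The principal obstacle lies in Step 3. The necessary conditions emerging from Step 2 are strong enough to eliminate most knots and most slope pairs, but producing a \emph{uniform} argument that closes out every residual $(K,p/q)$ --- rather than a case analysis that must be updated as new counterexample candidates appear --- is what has kept the conjecture open. A successful attack will likely require either a structural theorem constraining knots with $\tau=0$, $\Delta''_K(1)=0$, and $V_i(K)=V_i(\overline{K})$ for all $i$, or a genuinely new invariant sensitive enough to distinguish $Y_{p/q}(K)$ from $Y_{-p/q}(K)$ whenever $Y-K$ is a hyperbolic, non-solid-torus complement. For $Y\neq S^3$, the additional difficulty is that the Floer-theoretic inputs in Step 2 have not been developed to the same degree of explicitness as in the $S^3$ setting, so extending even the necessary-condition portion of the argument is itself nontrivial.
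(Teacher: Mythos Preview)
The statement you are addressing is a \emph{conjecture}; the paper does not prove it and does not claim to. The paper's contribution toward it is Theorem~\ref{thm:OpSlope}, which establishes necessary conditions (on the slopes and on $\tau(K)$) for purely cosmetic surgeries on knots in $S^3$ --- this is essentially the content of your Step~2 in the special case $Y=S^3$. There is therefore no ``paper's own proof'' of Conjecture~\ref{conj:Cosmetic} to compare against, and your proposal should be read as an outline of a research program rather than a proof.

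Even as a program, there are gaps beyond the one you flag in Step~3. In Step~1 you assert that the Cyclic Surgery Theorem, the Cabling Conjecture, and Boyer--Zhang dispose of all non-hyperbolic complements. This overreaches: the Cabling Conjecture concerns \emph{reducible} surgeries, and the Cyclic Surgery Theorem constrains slopes yielding cyclic $\pi_1$; neither directly rules out purely cosmetic surgeries on Seifert-fibered or toroidal complements in a general $Y$, and the cosmetic surgery question for satellite knots is not settled by these results even in $S^3$. In Step~2 you invoke a ``relative version of the mapping cone over a rational homology sphere'' for general $Y$, but the Ozsv\'ath--Szab\'o formula in that generality requires $K$ to be null-homologous and $Y$ to be a rational homology sphere; the conjecture is stated for arbitrary closed oriented $Y$, so this extension is not routine. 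Finally, Step~3 is, as you concede, the crux and remains genuinely open: hyperbolic Dehn surgery asymptotics, involutive $d$--invariants, and $\widehat{HFK}$ detection have not been shown to close the residual cases, and your outline gives no mechanism by which they would. In short, the proposal correctly identifies the known necessary conditions (and matches the paper's actual results there) but does not constitute a proof of the conjecture, nor does the paper.
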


This conjecture is highly nontrivial even when $Y=S^3$. In
\cite{GL}, Gordon and Luecke proved the famous Knot Complement
Theorem, which can be interpreted as that there are no cosmetic
surgeries if one of the two slopes is $\infty$. In \cite{BL},
Boyer and Lines proved the cosmetic surgery conjecture for any
knot $K$ with $\Delta ''_K(1)\neq 0$. In recent years, Heegaard
Floer homology \cite{OSzAnn1} became a powerful tool to study this
conjecture. In \cite{OSzRatSurg}, Ozsv\'ath and Szab\'o proved
that if $S^3_{r_1}(K)$ is homeomorphic to $S^3_{r_2}(K)$, then
either $S^3_{r_1}(K)$ is an $L$--space or $r_1$ and $r_2$ have
opposite signs. Moreover, when $S^3_{r_1}(K)$ is homeomorphic to
$S^3_{r_2}(K)$ as oriented manifolds, Wu \cite{Wu} ruled out the
case that $S^3_{r_1}(K)$ is an $L$--space, thus $r_1$ and $r_2$
must have opposite signs. In \cite{Wang}, Wang proved that genus
$1$ knots in $S^3$ do not admit purely cosmetic surgeries.

In this paper, we are going to put more restrictions on purely cosmetic surgeries for knots in $S^3$. Our main result is:

\begin{thm}\label{thm:OpSlope}
Suppose $K$ is a nontrivial knot in $S^3$, and $r_1,r_2\in\mathbb
Q\cup\{\infty\}$ are two distinct slopes such that $S^3_{r_1}(K)$
is homeomorphic to $S^3_{r_2}(K)$ as oriented manifolds. Then
$r_1,r_2$ satisfy that

(a) $r_1=-r_2$;

(b) if $r_1=p/q$, where $p,q$ are coprime integers, then
$$q^2\equiv-1\pmod p;$$

\noindent and $K$ satisfies

(c) $\tau(K)=0$, where $\tau$ is the concordance invariant defined
by Ozsv\'ath--Szab\'o \cite{OSz4Genus} and Rasmussen
\cite{RasThesis}.
\end{thm}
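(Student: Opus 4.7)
The plan is to use the Ozsv\'ath--Szab\'o correction terms ($d$--invariants) together with the rational surgery formula. Recall that for a rational homology sphere, an orientation-preserving homeomorphism induces a bijection on $\Spinc$ structures preserving the $d$--invariant. By the theorems of Ozsv\'ath--Szab\'o and Wu already recalled in the introduction, one may assume neither surgery yields an $L$--space and that $r_1,r_2$ have opposite signs. Matching first homology groups forces a common numerator, so write $r_1=p/q$ and $r_2=-p/q'$ with $p>0$ and $\gcd(p,q)=\gcd(p,q')=1$. The mirror identity $S^3_{-r}(K)\cong -S^3_r(\bar K)$ reformulates the hypothesis as an orientation-preserving homeomorphism
\[
S^3_{p/q}(K)\;\cong\;-S^3_{p/q'}(\bar K).
\]

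Next I would apply the Ozsv\'ath--Szab\'o rational surgery formula, which writes
\[
d(S^3_{p/q}(K),i)\;=\;d(L(p,q),i)-2\max\bigl\{V_{s(i)}(K),H_{t(i)}(K)\bigr\}
\]
for an explicit combinatorial parametrization $(s(i),t(i))$, where $V_s,H_s\ge 0$ are integer knot--Floer invariants extracted from $CFK^{\infty}(K)$. Mirror symmetry of the knot Floer complex gives $V_s(\bar K)=H_{-s}(K)$, converting the $d$--invariants of $S^3_{p/q'}(\bar K)$ back into data of $K$. Equating $d$--invariants via the bijection $\varphi$ on $\Spinc$ structures induced by the given homeomorphism produces, for each $i$, a relation of the form
\[
d(L(p,q),i)+d(L(p,q'),\varphi(i))\;=\;2\bigl(T_K(i)+T_{\bar K}(\varphi(i))\bigr),
\]
where $T_K(i)$ abbreviates the $\max\{V,H\}$ term.

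For part (a), I would sum these pointwise identities over all $p$ elements of $\Spinc(S^3_{p/q}(K))$. The left-hand side evaluates to a classical Dedekind-sum expression (essentially the Casson--Walker invariant of the lens space), while the right-hand side, after applying mirror symmetry, contributes equal totals on the two sides and therefore cancels, leaving a pure arithmetic identity in $(p,q,q')$ that forces $q'\equiv q\pmod p$, i.e.\ $r_1=-r_2$. With $q'=q$ in hand, the pointwise relation tightens to an amphicheirality statement for the underlying lens space $L(p,q)$, and the classical classification of oriented homeomorphisms of lens spaces then delivers $q^2\equiv -1\pmod p$, establishing (b). For (c), I would specialise the pointwise identity to a $\Spinc$ structure at which the $\max$ is attained by $V_0$ (respectively $H_0$); combined with the mirror identification $V_0(\bar K)=H_0(K)$, this forces $V_0(K)=H_0(K)=0$, and the standard equivalences $V_0=0\Leftrightarrow\tau(K)\le 0$ and $H_0=0\Leftrightarrow\tau(K)\ge 0$ deliver $\tau(K)=0$.

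The main obstacle will be the combinatorial bookkeeping: one must pin down the $\Spinc$ bijection $\varphi$ produced by the homeomorphism (including the conjugation symmetry of the knot Floer complex) precisely enough that the summed identities decouple into a purely arithmetic statement in $(p,q,q')$ and a knot-theoretic statement about $V_0$ and $H_0$, rather than mixing the two. A secondary subtlety is knowing the rational-surgery formula for \emph{all} slopes (not only those with $p/q \geq 2g(K)-1$), so that the $\max\{V,H\}$ term appears in the genuine symmetric form used above.
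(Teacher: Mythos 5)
There is a genuine gap, and it sits at the heart of your plan. After summing your pointwise identity over $\Spinc$ structures, the knot-dependent terms do \emph{not} cancel: each $\max\{V,H\}$ is nonnegative, and the contributions from $K$ and $\bar K$ \emph{add} rather than cancel, so the summed relation reads (up to normalization) $\sum_i d(L(p,q),i)+\sum_j d(L(p,q'),j)=2\sum_i T_K(i)+2\sum_j T_{\bar K}(j)\ge 0$, which is a knot-dependent statement and forces nothing arithmetic about $(p,q,q')$. The paper eliminates exactly these terms by bringing in tools your proposal never uses: the surgery formulas for the Casson--Gordon invariant (which forces $\Delta_K''(1)=0$) and for the Casson--Walker invariant, together with Rustamov's theorem $|H_1(Y)|\lambda(Y)=\sum_{\s}(\chi(HF_{\mathrm{red}}(Y,\s))-\tfrac12 d(Y,\s))$. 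Comparing the two surgery descriptions and using the two-sided inequalities $d(S^3_{p/q_1}(K),\s)\le d(L(p,q_1),\s)$ and $d(S^3_{-p/q_2}(K),\s)\ge d(L(p,-q_2),\s)$ from Theorem~\ref{thm:Grading} forces $\sum_\s\chi(HF_{\mathrm{red}})=0$ and equality of correction terms everywhere (Proposition~\ref{prop:obst}); only then does $V_0=H_0=0$ follow. Without this input your identity cannot ``decouple'' into an arithmetic statement plus a statement about $V_0,H_0$ --- that decoupling is precisely what the Casson--Walker/Casson--Gordon argument supplies.

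Two further steps are also incorrect as stated. First, the symmetry you invoke, $V_s(\bar K)=H_{-s}(K)$, is false: the true symmetry is $H_s(K)=V_{-s}(K)$ for the \emph{same} knot (Lemma~\ref{lem:V0H0} is its $s=0$ case), and e.g.\ the two trefoils have different $V_0$; consequently your claimed equivalences ``$V_0=0\Leftrightarrow\tau\le0$'' and ``$H_0=0\Leftrightarrow\tau\ge0$'' cannot both hold (they would force $\tau=0$ for every knot with $V_0=0$, contradicting the left-handed trefoil). What is actually available is $V_0=0\Rightarrow\nu(K)\le0\Rightarrow\tau(K)\le0$ (Proposition~\ref{prop:nu<=0} plus $\nu\in\{\tau,\tau+1\}$), and the paper obtains $\tau(K)\ge0$ by running the same argument on $\bar K$, using the equality $d(S^3_{-p/q_2}(K),\cdot)=d(L(p,-q_2),\cdot)$ from Proposition~\ref{prop:obst}. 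Second, even granting $V_0=H_0=0$, part (a) does not follow from matching lens-space $d$--invariants under an unspecified bijection of $\Spinc$ structures (you would need, and do not have, a completeness statement for such multisets); the paper instead deduces $\nu(K)=0$ and invokes the mapping-cone rank argument of \cite[Proposition~9.8]{OSzRatSurg} to get $q_1=q_2$, and then part (b) comes not from the homeomorphism classification of lens spaces but from $\lambda(L(p,q_1))=-\lambda(L(p,q_1))=0$ together with Lemma~\ref{lem:CWzero}. (This last Dedekind-sum route could also repair your (b), but only after (a) and the vanishing of the correction-term defects have been established by the means above.)
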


\begin{rem}
Suppose $K\subset S^3$ is a nontrivial knot, and $f$ is a
homeomorphism of  $S^3-K$. Then $f$ must send the longitude to the
longitude (up to orientation reversing) for homological reason, and the
meridian to the meridian (up to orientation reversing) by the Knot
Complement Theorem \cite{GL}. Thus if $f$ sends a slope to a
different slope, then $f$ extends to an orientation reversing
homeomorphism of $S^3$, which means that $K$ is amphicheiral. So
Conjecture~\ref{conj:Cosmetic} for knots in $S^3$ can be
reformulated as: if $S^3_{r_1}(K)$ is homeomorphic to
$S^3_{r_2}(K)$ as oriented manifolds, then $r_1=-r_2$ and $K$ is
amphicheiral. Our Theorem~\ref{thm:OpSlope} proves the part of
this conjecture concerning the slopes, and asserts that $\tau(K)$
is the same as the $\tau$ invariant of amphicheiral knots.
\end{rem}

\begin{rem}
Ozsv\'ath and Szab\'o \cite{OSzRatSurg} remarked that their method can be used to exclude cosmetic surgeries for certain numerators $p$.
To illustrate, they proved that $p\ne3$. Our Theorem~\ref{thm:OpSlope} (b) implies a more precise restriction on $p$: $-1$ must be a quadratic residue modulo $p$.
\end{rem}

\begin{rem}
Ozsv\'ath and Szab\'o \cite{OSzRatSurg} gave the example of $K=9_{44}$. This knot is a genus $2$ knot with $\tau(K)=0$ and
$$\Delta_K(T)=T^{-2}-4T^{-1}+7-4T+T^2.$$
Heegaard Floer homology does not obstruct $K$ from admitting purely cosmetic surgeries. In fact, $S^3_{1}(K)$ and $S^3_{-1}(K)$ have the same Heegaard Floer homology.
However, these two manifolds are not homeomorphic since they have different hyperbolic volumes.
\end{rem}

An important ingredient of the proof of Theorem~\ref{thm:OpSlope} is a surgery formula for Ozsv\'ath and Szab\'o's correction terms. The statement of the formula is as follows, where the terms in the formula will be explained in Section~2.

\begin{prop}\label{prop:Corr}
Suppose $p,q>0$, and fix $0\le i\le p-1$. Then
$$d(S^3_{p/q}(K),i)=d(L(p,q),i)-2\max\{V_{\lfloor\frac{i}q\rfloor},H_{\lfloor\frac{i+p(-1)}q\rfloor}\}.$$
\end{prop}

The above formula has independent interests, since the correction terms have been very useful in many applications of Heegaard Floer homology. 

We also compute the rank of the reduced Heegaard Floer homology of surgeries on knots which admits purely cosmetic surgeries. 
 
\begin{prop}\label{prop:Const}
Suppose $K\subset S^3$ is a knot with $S^3_{r}(K)\cong S^3_{-r}(K)$ for some $r\in\mathbb Q\setminus\{0\}$. Then there exists a constant $C=C(K)$, such that
$$\mathrm{rank}\:HF_{\mathrm{red}}(S^3_{p/q}(K))=|q|\cdot C,$$
for any coprime nonzero integers $p,q$. In fact, the constant $C(K)$ is the
rank of $HF_{\mathrm{red}}(S^3_{p}(K))$ for $p>0$.
\end{prop}

This paper is organized as follows. In Section~\ref{sect:RatForm}, we use Ozsv\'ath and Szab\'o's rational surgery formula \cite{OSzRatSurg} to prove Proposition~\ref{prop:Corr}. This gives a bound of the correction terms by the correction terms of the corresponding lens spaces. A necessary and sufficient condition for the bound to be reached is found.
In Section~\ref{sect:CGCW}, we review the Casson--Walker and Casson--Gordon invariants. Combining these with the bound obtained in Section~\ref{sect:RatForm}, we show that if there are purely cosmetic surgeries,
then the correction terms are exactly the correction terms of the corresponding lens spaces. Our main theorem is then proved in Section~\ref{sect:Main}.
In Section~\ref{sect:Red}, we will prove Proposition~\ref{prop:Const}.

\vspace{5pt}\noindent{\bf Acknowledgements.} This work was carried out when the first author visited Princeton University.
The first author was partially supported by an AIM Five-Year Fellowship and NSF grant
number DMS-1021956. The second author was supported by a Simons Postdoctoral Fellowship.


\section{Rational surgeries and the correction terms}\label{sect:RatForm}

\subsection{The rational surgery formula}

In this subsection, we recall the rational surgery formula of Ozsv\'ath and Szab\'o \cite{OSzRatSurg}, and then compute the example of surgeries on the unknot.

\begin{rem}
For simplicity, throughout this paper we will use $\mathbb F_2=\mathbb Z/2\mathbb Z$ coefficients for Heegaard Floer homology. Our proofs work for $\mathbb Z$ coefficients as well.
\end{rem}

Given a knot $K$ in an integer homology sphere $Y$. Let $C=CFK^{\infty}(Y,K)$ be the knot Floer chain complex of $(Y,K)$. There are chain complexes
$$A^+_k=C\{i\ge0 \text{ or }j\ge k\},\quad k\in\mathbb Z$$
and $B^+=C\{i\ge0\}\cong CF^+(Y)$. As in \cite{OSzIntSurg}, there are chain maps
$$v_k,h_k\co A^+_k\to B^+.$$

Given $\frac pq\in\mathbb Q\setminus\{0\}$,
let $$\mathbb A_i^+=\bigoplus_{s\in\mathbb
Z}(s,A^+_{\lfloor\frac{i+ps}q\rfloor}(K)),\mathbb
B_{i}^+=\bigoplus_{s\in\mathbb Z}(s,B^+).$$ Here,the first entry
$s$ in the parentheses is simply a decoration used to distinguish
identical copies of $A^+_k$ or $B^+$.
Define maps
$$v^+_{\lfloor\frac{i+ps}q\rfloor}\co (s,A^+_{\lfloor\frac{i+ps}q\rfloor}(K))\to (s,B^+),\quad h^+_{\lfloor\frac{i+ps}q\rfloor}\co (s,A^+_{\lfloor\frac{i+ps}q\rfloor}(K))\to (s+1,B^+).$$
Adding these up, we get a chain map
$$D_{i,p/q}^+\co\mathbb A_i^+\to \mathbb B_i^+,$$
with
$$D_{i,p/q}^+\{(s,a_s)\}_{s\in\mathbb Z}=\{(s,b_s)\}_{s\in\mathbb Z},$$
where
$$b_s=v^+_{\lfloor\frac{i+ps}q\rfloor}(a_s)+h^+_{\lfloor\frac{i+p(s-1)}q\rfloor}(a_{s-1}).$$

In \cite{OSzRatSurg}, there is an identification of $\mathrm{Spin}^c(Y_{p/q}(K))$ with $\mathbb Z/p\mathbb Z$. This identification can be made explicit by the procedure in \cite[Section~4, Section~7]{OSzRatSurg}. For our purpose in this paper, we only need to know that such an identification exists. We will use this identification throughout this paper.

\begin{thm}[Ozsv\'ath--Szab\'o]\label{thm:MC}
Let $\mathbb X^+_{i,p/q}$ be the mapping cone of $D_{i,p/q}^+$, then there is a relatively graded isomorphism of groups
$$H_*(\mathbb X^+_{i,p/q})\cong HF^+(Y_{p/q}(K),i).$$
\end{thm}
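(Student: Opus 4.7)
The plan is to bootstrap from the integer surgery formula of Ozsv\'ath--Szab\'o \cite{OSzIntSurg}, which already identifies $HF^+(Y_n(K),i)$ with the homology of the analogous mapping cone when $q=1$. The idea is to realize $Y_{p/q}(K)$ as the end of a sequence of integer surgeries on an auxiliary link built from $K$ (say $K$ together with a meridian carrying a $-q/p$ continued--fraction chain), and then use a link surgery / exact triangle argument to descend from the known integer case to the desired rational case.

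As a first step I would establish a ``large surgery'' identification: for $|i|$ small compared to $p/q$, $HF^+(Y_{p/q}(K),i)\cong H_*(A^+_{\lfloor(i+ps)/q\rfloor})$ for an appropriate $s$. This is proved by adapting a doubly pointed Heegaard diagram $(\Sigma,\alpha,\beta,w,z)$ for $(Y,K)$: one replaces the meridional $\alpha$--curve by a curve realizing the $p/q$ framing, and shows that in the resulting diagram the $z$--basepoint constrains the holomorphic disks to land in the subcomplex $A^+_k$ with $k=\lfloor(i+ps)/q\rfloor$. The floor function arises from tracking which period of the twisted diagram contains $z$, and the shift $ps$ encodes the identification $\Spinc(Y_{p/q}(K))\cong\mathbb{Z}/p$ via restriction to the knot complement.

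Next I would exploit the $\Spinc$--refined surgery exact triangle relating $HF^+(Y_{p/q}(K))$, $HF^+(Y)$, and a neighboring rational surgery (e.g.\ obtained by a $\pm 1$ continued--fraction move on the meridional component). The two edge maps of the triangle are realized by cobordism maps for $(-1)$--framed 2--handle attachments and match the maps $v^+_k$ and $h^+_k$ in the definition of $D^+_{i,p/q}$. Iterating the triangle, one assembles the resulting octahedra into a spectral sequence whose $E^1$--page is $\mathbb{A}^+_i\oplus\mathbb{B}^+_i$ and whose differential is $D^+_{i,p/q}$; a rank count (comparing to the large surgery case along a subsequence of $p/q\to\infty$) forces degeneration at $E^2$, yielding the claimed isomorphism of the homology of the mapping cone with $HF^+(Y_{p/q}(K),i)$.

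The main obstacle will be the infinite direct sums defining $\mathbb{A}^+_i$ and $\mathbb{B}^+_i$. Only finitely many summands contribute in any bounded grading window, so I would first work with the finite truncation $|s|\le S$, prove the statement there, and then pass to the direct limit as $S\to\infty$, verifying that the limit is well behaved on the $+$ theory. A parallel difficulty is ensuring that the isomorphism respects the \emph{relative} Maslov grading: this requires computing the grading shift of each cobordism map from $c_1(\mathfrak{s})^2$ and the signature/Euler characteristic of the 2--handle cobordism, and checking that these shifts reassemble into the expected internal grading on $\mathbb{X}^+_{i,p/q}$.
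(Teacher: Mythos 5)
You should first note that the paper does not prove this statement at all: it is quoted verbatim from Ozsv\'ath--Szab\'o \cite{OSzRatSurg} and used as a black box, so the only meaningful comparison is with the original proof there. That proof is quite different from your outline. Ozsv\'ath and Szab\'o do not iterate exact triangles over a continued-fraction expansion; instead they realize $Y_{p/q}(K)$ as a single \emph{integral} surgery on the connected sum of $K$ with a Floer-simple knot $O_{q/r}$ in a lens space $L(q,r)$, compute the knot Floer complex of that connected sum by the K\"unneth formula (the lens-space factor contributes the repeating pattern and the floor functions $\lfloor (i+ps)/q\rfloor$), and then apply the already-established integral surgery formula for rationally null-homologous knots from \cite{OSzIntSurg}. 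The floor-function indexing and the $\Spinc$ decomposition, which in your plan are to be extracted from ``tracking which period of the twisted diagram contains $z$'' and from matching edge maps in iterated triangles, come out automatically from the combinatorics of $O_{q/r}$ in their argument.

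Beyond being a different route, your sketch has a genuine gap at its central step. The assertion that ``a rank count (comparing to the large surgery case along a subsequence of $p/q\to\infty$) forces degeneration at $E^2$'' is not an argument: the groups $\mathbb{A}^+_i$, $\mathbb{B}^+_i$ and $HF^+(Y_{p/q}(K),i)$ are infinitely generated over $\mathbb{F}_2$ (they contain $\mathcal{T}^+$ towers), so ranks are not defined without first truncating and keeping track of the $U$-action and gradings; moreover the comparison groups change with the slope, so a count along a subsequence of slopes says nothing about the fixed slope $p/q$. Even in the integral case, the proof in \cite{OSzIntSurg} does not proceed by formal degeneration of a spectral sequence from iterated triangles; it requires a chain-level identification of a truncated mapping cone with $CF^+$ of the surgered manifold, including control of the higher polygon maps, the identification of the connecting homomorphisms with $v^+_k$ and $h^+_k$ refined by $\Spinc$ structures, and a separate argument for gradings. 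Your plan defers exactly these points (edge-map identification with the correct $\lfloor\cdot\rfloor$ indices, quasi-isomorphism rather than $E^1$-page bookkeeping, behavior under the truncation/direct limit), which is where the entire difficulty of the theorem lives; as written the proposal would not close. If you want a proof in the spirit of your first step, the efficient repair is precisely the connected-sum reduction of \cite{OSzRatSurg}, which converts the rational case into one application of the integral theorem rather than an unbounded iteration of triangles.
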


The above isomorphism is actually $U$--equivariant, so the two groups are isomorphic as $\mathbb F_2[U]$--modules.

\begin{rem}\label{rem:AbGr}
For $K\subset S^3$, the absolute grading on $\mathbb X_{i,p/q}^+$ is determined by an absolute grading on $\mathbb B^+$ which is independent of $K$.
The absolute grading on $\mathbb B^+$ is chosen so that the grading of $$\mathbf1\in H_*(\mathbb X^+_{i,p/q}(O))\cong\mathcal T^+:=\mathbb F_2[U,U^{-1}]/U\mathbb F_2[U]$$ is $d(L(p,q),i)$, where $O$ is the unknot.
This absolute grading on $\mathbb X^+_{i,p/q}(K)$ then agrees with the absolute grading on $HF^+(S^3_{p/q}(K),i)$. See \cite[Section~7.2]{OSzRatSurg} for a discussion of the absolute grading.
\end{rem}

Let $$\mathfrak A_k^+=H_*(A_k^+), \, \mathfrak B^+=H_*(B^+),$$
let $$\mathfrak v_k^+, \mathfrak h_k^+\co \mathfrak A_k^+\to \mathfrak B^+$$
be the maps induced on homology,
and let $$\mathfrak D_{i,p/q}^+\co H_*(\mathbb A_i^+)\to H_*(\mathbb
B_i^+)$$ be the map induced by $D_{i,p/q}^+$ on homology. 
There is a natural short exact sequence of chain complexes:
$$
\xymatrix{
0\ar[r]&\mathbb B^+ \ar[r]^-{\mathrm{incl}} & \mathbb X_{i,p/q}^+\ar[r]^-{\mathrm{proj}} &\mathbb A^+_i\ar[r] &0.
}
$$
Then Theorem~\ref{thm:MC} implies that there is an exact triangle
\begin{equation}\label{eq:TriangleMC}
\xymatrix{
H_*(\mathbb A_{i}^+) \ar[r]^-{\mathfrak D_{i,p/q}^+} &H_*(\mathbb B^+)\ar[d]^-{\mathrm{incl}_*}\\
&HF^+(Y_{p/q}(K),i).\ar[lu]^-{\mathrm{proj}_*}
}
\end{equation}


If $K=O$ is the unknot, then the $\frac pq$--surgery on $K$ gives rise to the lens space $L(p,q)$.
Then $$\mathfrak A_k^+\cong A^+_k\cong\mathfrak B_k^+\cong  B^+_k\cong\mathcal T^+.$$ We have
$$v_k^+=\left\{
\begin{array}{ll}
U^{|k|}&\text{if } k\le0,\\
1&\text{if }k\ge0,
\end{array}
\right.$$
$$h_k^+=\left\{
\begin{array}{ll}
1&\text{if } k\le0,\\
U^{|k|}&\text{if }k\ge0.
\end{array}
\right.$$

Suppose $p,q>0$. Let $0\le i\le p-1$, then
$\lfloor\frac{i+ps}q\rfloor\ge0$ if and only if $s\ge0$.
We have $b_{0}=a_{0}+a_{-1}$.
For $\xi\in\mathcal T^+$, define $$\iota(\xi)=\{(s,\xi_s)\}_{s\in\mathbb Z}\in\mathbb A^+_i$$
by letting $$
\left\{
\begin{array}{ll}
\xi_{0}=\xi_{-1}=\xi,&\\
\xi_s=U^{\lfloor\frac{i+p(s-1)}q\rfloor}\xi_{s-1},&\text{if }s>0,\\
\xi_s=U^{-\lfloor\frac{i+p(s+1)}q\rfloor}\xi_{s+1},&\text{if }s<-1.
\end{array}
\right.
$$

Then $\iota$ maps $\mathcal T^+$ isomorphically to the kernel of
$D^+_{i,p/q}$. It is clear that $D^+_{i,p/q}$ is surjective, (see
also Lemma~\ref{lem:>0Surj} for a proof,) hence $\ker D^+_{i,p/q}
\cong \mathbb X^+_{i,p/q}$. In particular, $\iota(\mathbf1)$
should have absolute grading $d(L(p,q),i)$. The absolute grading
on $\mathbb B^+$ can be determined by the fact
\begin{equation}\label{eq:GrL(p,q)}
v^+_{\lfloor\frac{i}q\rfloor}(0,\mathbf1)=h^+_{\lfloor\frac{i+p(-1)}q\rfloor}(-1,\mathbf1)=(0,\mathbf1)\in(0,B^+).
\end{equation}

\subsection{Bounding the correction terms}

For a rational homology three-sphere $Y$ equipped with a $\Spinc$ structure $\s$, $HF^+(Y,\s)$ can be decomposed as the direct sum of two groups: The first group is the image of $HF^\infty(Y,\s)$ in $HF^+(Y,\s)$, whose minimal absolute $\mathbb{Q}$ grading is an invariant of $(Y,\s)$ and is denoted by $d(Y,\s)$, the {\it correction term} \cite{OSzAbGr}; the second group is the quotient modulo the above image and is denoted by $HF_{\mathrm{red}}(Y,\s)$.  Altogether, we have $$HF^+(Y,\s)=\mathcal{T}^+_{d(Y,\s)}\oplus HF_{\mathrm{red}}(Y,\s).$$

For a knot $K\subset S^3$, let $\mathfrak A_k^T=U^n\mathfrak A^+_k$ for $n\gg0$, then
$\mathfrak A_k^T\cong\mathcal T^+$. Let $\mathfrak D_{i,p/q}^T$ be the restriction of
$\mathfrak D_{i,p/q}^+$ on
$$\mathcal A_i^T=\bigoplus_{s\in\mathbb Z}(s,\mathfrak A^T_{\lfloor\frac{i+ps}q\rfloor}(K)).$$

Since $\mathfrak v^+_k,\mathfrak h^+_k$ are graded isomorphisms at sufficiently high
gradings and are $U$--equivariant, $\mathfrak v^+_k|\mathfrak A_k^T$ is modeled on multiplication by
$U^{V_k}$ and $\mathfrak h^+_k|\mathfrak A_k^T$ is modeled on multiplication by
$U^{H_k}$, where $V_k,H_k\ge0$. Note that the numbers $V_k$ and
$H_k$ are invariants of the double-filtered chain homotopy
equivalence type of $CFK^\infty(Y,K)$.  Hence, they are invariants
of the knot $K$.

\begin{lem}\label{lem:VHMono}
$V_k\ge V_{k+1}$, $H_k\le H_{k+1}$.
\end{lem}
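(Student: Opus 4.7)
The plan is to construct, for each $k$, a natural $\mathbb{F}_2[U]$-equivariant chain map relating $A_k^+$ and $A_{k+1}^+$, verify its compatibility with $v_k$ and $v_{k+1}$, and then read off the monotonicity from the behavior of the tower parts. Concretely, I would use the surjection $\pi_k\co A_k^+\twoheadrightarrow A_{k+1}^+$ induced by the inclusion of subcomplexes $C\{i<0,\,j<k\}\subseteq C\{i<0,\,j<k+1\}$; indeed, both $A_k^+$ and $A_{k+1}^+$ are quotients of $CFK^\infty$ by these two subcomplexes. The kernel of $\pi_k$ is $C\{i<0,\,j=k\}$, on which $U$ acts as $0$ (since $U$ carries such elements outside the support set of $A_k^+$). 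Moreover, since both $v_k$ and $v_{k+1}$ are vertical projections onto $B^+=C\{i\ge0\}$ and $\ker \pi_k\subset \ker v_k$, we have $v_k=v_{k+1}\circ \pi_k$.

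The crucial step is to show that $\pi_k$ restricts to a surjection on tower parts $A_k^T\twoheadrightarrow A_{k+1}^T$. The inclusion $\pi_k(A_k^T)\subseteq A_{k+1}^T$ is clear from $U$-equivariance. For surjectivity, one uses the long exact sequence of $0 \to \ker\pi_k \to A_k^+ \to A_{k+1}^+ \to 0$: since $\ker\pi_k$ has trivial $U$-action, the connecting map $\delta$ satisfies $\delta(Uy)=U\delta(y)=0$, so $\delta$ vanishes on $A_{k+1}^T\subseteq U\cdot H_*(A_{k+1}^+)$; hence every tower element lifts, and the $U$-divisibility of the tower promotes the lift to lie in $A_k^T$. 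Once this surjectivity is in hand, the inequality $V_k\ge V_{k+1}$ follows from a purely algebraic observation: any graded, $\mathbb{F}_2[U]$-equivariant surjection $\mathcal{T}^+_{(d_1)}\twoheadrightarrow \mathcal{T}^+_{(d_2)}$ requires $d_1\le d_2$, because the target tower generator at grading $d_2$ must admit a preimage at grading $d_2$ in the source, which exists only when $d_2\ge d_1$. Taking $d_i=-2V_{k+i-1}$ gives exactly $V_k\ge V_{k+1}$.

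For $H_k\le H_{k+1}$, I would invoke the conjugation symmetry of $CFK^\infty(S^3,K)$, which exchanges the $i$- and $j$-filtrations and yields the identity $H_k(K)=V_{-k}(K)$; the already-established monotonicity of the sequence $\{V_k\}$ then gives $H_{k+1}=V_{-k-1}\ge V_{-k}=H_k$. Alternatively, one can rerun the above argument with $h$ in place of $v$, using the chain-level relation $h_{k+1}\circ \pi_k=U\cdot h_k$ (after the standard identification of $B^+$ with $C\{j\ge0\}$); this alternative requires the additional bound $V_{k-1}-V_k\le 1$, which is not automatic from our setup, so the symmetry argument is the cleaner route. The main obstacle in either approach is the tower-surjectivity step, which demands careful bookkeeping with the long exact sequence and the $U$-torsion structure of $\ker \pi_k$.
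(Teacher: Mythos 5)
Your proof is correct and rests on the same key observation as the paper's: the quotient map $\pi_k\co A_k^+\to A_{k+1}^+$ through which $v_k^+$ factors as $v_{k+1}^+\circ\pi_k$, with the monotonicity of $V_k$ then read off from the induced map on the tower parts. The long-exact-sequence argument for surjectivity on towers is a valid (if more elaborate than necessary) way to fill in the step the paper calls ``easy to see,'' and your use of the $i\leftrightarrow j$ conjugation symmetry, giving $H_k=V_{-k}$, is exactly the symmetry the paper itself invokes in Lemma~\ref{lem:V0H0}, so the $H_k$ statement is handled in the paper's spirit as well.
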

\begin{proof}
The map $v^+_k$ factors through the map $v^+_{k+1}$ via the factorization
$$
\begin{CD}
C\{i\ge0\text{ or }j\ge k\}@>>> C\{i\ge0\text{ or }j\ge k+1\}@>v^+_{k+1}>>C\{i\ge0\}.
\end{CD}
$$
Hence it is easy to see that $V_k\ge V_{k+1}$. A similar argument works for $H_k$ by considering the factorization of $h^+_{k+1}$:
$$
\begin{CD}
C\{i\ge0\text{ or }j\ge k+1\}@.@.\\
@VUVV \\
C\{i\ge-1\text{ or }j\ge k\}
@>>> C\{i\ge0\text{ or }j\ge k\}@>h^+_{k}>>C\{i\ge0\}.
\end{CD}
$$
\end{proof}

It is obvious that $V_k=0$ when $k\ge g$ and $H_k=0$ when $k\le -g$, $V_k\to +\infty$ as $k\to -\infty$, $H_k\to +\infty$ as $k\to+\infty$.

\begin{thm}\label{thm:Grading}
Suppose $p,q>0$ are coprime integers. Then
$$d(S^3_{p/q}(K),i)\le d(L(p,q),i)$$
for all $i\in\mathbb Z/p\mathbb Z$. The equality holds for all $i$ if and only if $V_0=H_0=0$.
\end{thm}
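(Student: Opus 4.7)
My plan is to express $d(S^3_{p/q}(K),i)$ in the form $d(L(p,q),i) - 2M(K,i)$ for a non-negative integer $M(K,i)$ computed from the mapping cone, with $M(K,i) \equiv 0$ if and only if $V_0 = H_0 = 0$.

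First, I would restrict attention to the top towers $\A^T_i = \bigoplus_s(s, A^T_{k_s})$ and $\B^T_i = \bigoplus_s(s, B^T)$, where $k_s = \lfloor(i+ps)/q\rfloor$ and the maps $v^+_{k_s}, h^+_{k_s}$ act on these by multiplication by $U^{V_{k_s}}, U^{H_{k_s}}$. Since the image of $HF^\infty$ in $HF^+(S^3_{p/q}(K),i)$ is generated by classes that survive $U$-inversion, and these eventually live in the top-tower part of the mapping cone, the correction term $d(S^3_{p/q}(K),i)$ is determined by the minimal grading of such a surviving class in $H_*(\X^+_{i,p/q})$.

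Second, mimicking the unknot formula preceding~(\ref{eq:GrL(p,q)}), I would build a cycle $\iota_K(\mathbf 1) = \{(s, \xi_s)\} \in \A^T_i$ with $\xi_s$ in the top tower of $A^T_{k_s}$, solving the compatibility $U^{V_{k_s}}\xi_s = U^{H_{k_{s-1}}}\xi_{s-1}$. Writing $\xi_s = U^{-m_s}\mathbf{1}_s$, this becomes the recursion $m_s - m_{s-1} = V_{k_s} - H_{k_{s-1}}$, with $\xi_s = 0$ forced where $m_s$ would be negative. Since $V_k = 0$ for $k \gg 0$ and $H_k = 0$ for $-k \gg 0$, the recursion stabilizes. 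Comparing with the unknot's recursion (using $V_k^O = \max(-k,0)$, $H_k^O = \max(k,0)$, and its cycle at grading $d(L(p,q),i)$ by Remark~\ref{rem:AbGr}), plus a careful accounting of how the ``extra'' $V_k, H_k$ for $K$ (beyond the unknot baseline) force one to push deeper into the tower, yields the formula $d(S^3_{p/q}(K),i) = d(L(p,q),i) - 2M(K,i)$ with $M(K,i) \ge 0$, establishing the inequality.

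For the equality statement, if $V_0 = H_0 = 0$ then Lemma~\ref{lem:VHMono} gives $V_k = 0$ for all $k \ge 0$ and $H_k = 0$ for all $k \le 0$; the recursion then agrees with the unknot's in the range where either side is non-trivial, so $M(K,i) = 0$ uniformly in $i$. Conversely, if $V_0 > 0$, pick $i$ with $\lfloor i/q\rfloor = 0$ so that $k_0 = 0$ enters the recursion: the compatibility at the central step $s = 0$ now forces $m_0 \ge V_0$, giving $M(K,i) \ge V_0 > 0$. The case $H_0 > 0$ is handled symmetrically by choosing $i$ with $\lfloor(i-p)/q\rfloor = 0$.

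The main obstacle will be pinning down $M(K,i)$ as a concrete, manifestly non-negative combinatorial quantity---most naturally, some max over $s$ of partial sums of the discrepancies $V_{k_s} - V_{k_s}^O$ and $H_{k_{s-1}} - H_{k_{s-1}}^O$---and verifying that no cancellation in the bi-infinite recursion allows $M(K,i)$ to become negative, or to vanish when $V_0$ or $H_0$ is positive. A careful use of the monotonicity in Lemma~\ref{lem:VHMono}, together with the comparison to the unknot's $V_k^O, H_k^O$, should make this precise.
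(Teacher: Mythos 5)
Your overall architecture (restrict to the towers $A^T_k$, build an explicit cycle in $\ker D^T_{i,p/q}$ analogous to $\iota$ for the unknot, and read off the $d$--invariant by comparing gradings via Remark~\ref{rem:AbGr}) is the same as the paper's, but the step you defer as ``the main obstacle'' is exactly where the content lies, and it does not resolve itself by bookkeeping partial sums of discrepancies against the unknot. The missing idea is the paper's Lemma~\ref{lem:V0H0}: by interchanging the roles of the two basepoints (equivalently of $i$ and $j$ in $CFK^\infty$), $v^+_0$ and $h^+_0$ are equivalent, so $V_0=H_0$. Combined with the monotonicity of Lemma~\ref{lem:VHMono} this gives $H_{\lfloor\frac{i+p(s-1)}q\rfloor}\ge H_0=V_0\ge V_{\lfloor\frac{i+ps}q\rfloor}$ for $s>0$ and the reverse inequalities for $s<0$, so the recursion $U^{V_{k_s}}\xi_s=U^{H_{k_{s-1}}}\xi_{s-1}$ can be solved with nonnegative $U$--exponents at every step except the central junction $s\in\{-1,0\}$; hence no cancellation issue arises, the grading drop concentrates there, and one gets the closed form $d(S^3_{p/q}(K),i)=d(L(p,q),i)-2\max\{V_{\lfloor i/q\rfloor},H_{\lfloor(i-p)/q\rfloor}\}$ (Proposition~\ref{prop:Corr}), from which both the inequality and the equality criterion follow. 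Without $V_0=H_0$ your quantity $M(K,i)$ is not pinned down and the nonnegativity/noncancellation worry you raise is genuine.

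Two further concrete gaps. First, your assertion that the $U^n$--image of the tower ``eventually lives in the top-tower part of the mapping cone'' needs proof: it holds because for $p/q>0$ the restricted map $D^T_{i,p/q}$ is surjective (Lemma~\ref{lem:>0Surj}), so $H_*(\mathbb X^+_{i,p/q})$ is identified with $\ker D^+_{i,p/q}$ and $U^n\ker D^+_{i,p/q}\subset\ker D^T_{i,p/q}$ for $n\gg0$ (Lemma~\ref{lem:Tpart}); for negative slopes $D^T$ is injective and the tower is carried by the cokernel, so this localization is not automatic and your argument must use positivity of the slope somewhere. Second, in your converse step for $H_0>0$ you propose choosing $i$ with $\lfloor(i-p)/q\rfloor=0$; no such $i$ exists, since for $0\le i\le p-1$ and $p,q>0$ one has $\lfloor(i-p)/q\rfloor\le-1$. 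The correct fix is again $V_0=H_0$: equality of all correction terms forces $V_0=0$ (take $i$ with $\lfloor i/q\rfloor=0$), and then $H_0=0$ follows from the lemma rather than from a symmetric choice of $i$.
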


\begin{rem}
The first part of Theorem~\ref{thm:Grading} easily follows from \cite[Theorem~9.6]{OSzAbGr} and \cite[Corollary~1.5]{OSzPlumb}.
We will present a different proof, which enables us to get the conclusion about $V_0$ and $H_0$.
\end{rem}

\begin{lem}\label{lem:V0H0}
For any knot $K\subset S^3$, we have $V_0=H_0$. Hence $V_k\ge H_k$
if $k\le0$ and $V_k\le H_k$ if $k\ge0$.
\end{lem}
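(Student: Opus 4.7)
The plan is to derive $V_0=H_0$ from the conjugation symmetry of $CFK^\infty$ for knots in $S^3$, and then obtain the remaining inequalities from the monotonicity in Lemma~\ref{lem:VHMono}.

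First, I would invoke the filtered chain-homotopy self-equivalence $\Phi$ of $CFK^\infty(S^3,K)$ that interchanges the two filtrations $i$ and $j$, a standard feature of knot Floer homology for knots in $S^3$. Applying $\Phi$ to $A_k^+=C\{i\ge 0\text{ or }j\ge k\}$ and then shifting by $U^k$ produces a chain equivalence $A_k^+\simeq A_{-k}^+$. By construction of the maps in \cite{OSzIntSurg}---$v_k$ is the projection $A_k^+\to B^+=C\{i\ge 0\}$ onto the $i$-filtration, while $h_k$ is the analogous projection onto the $j$-filtration followed by the conjugation identification with $B^+$---the symmetry $\Phi$ (together with the shift by $U^k$) intertwines $v_k$ with $h_{-k}$. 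Since $\Phi$ and $U^k$ are $U$-equivariant on the top towers $A_k^T\cong A_{-k}^T\cong\mathcal T^+$, the induced maps act by the same power of $U$, giving $V_k=H_{-k}$ for all $k$. Specializing to $k=0$ yields $V_0=H_0$.

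The "hence" part is then immediate. By Lemma~\ref{lem:VHMono}, $V_k$ is non-increasing and $H_k$ is non-decreasing in $k$. Combined with $V_0=H_0$, for $k\le 0$ we have $V_k\ge V_0=H_0\ge H_k$, and for $k\ge 0$ we have $V_k\le V_0=H_0\le H_k$.

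The only delicate point is verifying the intertwining of $v_k$ with $h_{-k}$ under the conjugation symmetry, which amounts to bookkeeping of the filtration shifts in the definitions of $v_k$ and $h_k$ and uses no new Heegaard Floer input beyond the conjugation symmetry itself. As a sanity check, for the unknot computation recorded above one has $V_k=\max(0,-k)$ and $H_k=\max(0,k)$, so indeed $V_k=H_{-k}$.
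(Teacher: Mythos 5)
Your proof is correct and is essentially the paper's argument: the conjugation equivalence of $CFK^\infty$ interchanging the $i$ and $j$ filtrations is exactly what the paper realizes by passing to the diagram $(-\Sigma,\mbox{\boldmath$\beta$},\mbox{\boldmath$\alpha$},z,w)$, and the ``hence'' part via Lemma~\ref{lem:VHMono} is identical. Your version records the slightly stronger symmetry $V_k=H_{-k}$, but the underlying idea is the same.
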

\begin{proof}
If $$(\Sigma,\mbox{\boldmath${\alpha}$},
\mbox{\boldmath$\beta$},w,z)$$ is a doubly pointed Heegaard
diagram for $(S^3,K)$, then
$$(-\Sigma,\mbox{\boldmath${\beta}$},
\mbox{\boldmath$\alpha$},z,w)$$
is also a Heegaard diagram for $(S^3,K)$. Hence the roles of $i,j$ can be interchanged. It follows that $v^+_0$ is equivalent to $h^+_0$, hence $V_0=H_0$.
\end{proof}

\begin{lem}\label{lem:>0Surj}
Suppose $p/q>0$. Then the map $\mathfrak D_{i,p/q}^T$ is surjective for each
$0\le i \le p-1$.
\end{lem}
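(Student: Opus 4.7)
My plan is to construct an explicit preimage for every finitely supported element in the target. The image of $D^T$ lies in the tower submodule $\mathbb{B}^T_i := \bigoplus_s (s, \mathcal{T}^+) \subset \mathbb{B}^+_i$, because any $\mathbb{F}_2[U]$-module map from $\mathcal{T}^+$ into a $U$-torsion module is zero, so the $HF_{\mathrm{red}}$-components of $v_k^+|_{A_k^T}$ and $h_k^+|_{A_k^T}$ vanish. The key algebraic ingredient is that multiplication by $U^n$ is a surjective endomorphism of $\mathcal{T}^+$ for every $n \geq 0$, since $U^n \cdot U^{-(k+n)} = U^{-k}$ in $\mathcal{T}^+ = \mathbb{F}_2[U, U^{-1}]/U\mathbb{F}_2[U]$.

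Abbreviating $V'_s := V_{\lfloor (i+ps)/q \rfloor}$ and $H'_s := H_{\lfloor (i+ps)/q \rfloor}$, the equation $D^T(\{a_s\}) = \{b_s\}$ on towers reads
\[
b_s = U^{V'_s} a_s + U^{H'_{s-1}} a_{s-1}, \qquad s \in \mathbb{Z}.
\]
Because $p, q > 0$, the indices $\lfloor (i+ps)/q \rfloor$ traverse $\mathbb{Z}$ monotonically as $s$ varies, so combined with $V_k = 0$ for $k \gg 0$ and $H_k = 0$ for $k \ll 0$ I may fix $M$ large enough that $b_s = 0$ for $|s| \geq M$, $V'_s = 0$ for $s \geq M$, and $H'_s = 0$ for $s \leq -M$. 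Setting $a_s = 0$ for $s \leq -M$, I inductively solve, for each $s > -M$, the equation $U^{V'_s} a_s = c_s$ where $c_s := b_s + U^{H'_{s-1}} a_{s-1}$, taking $a_s$ to be the unique homogeneous element of grading $\mathrm{gr}(c_s) + 2 V'_s$ when $c_s \neq 0$, and $a_s = 0$ otherwise. Such $a_s$ exists by the surjectivity of $U^{V'_s}$.

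It remains to verify that $\{a_s\}$ lies in the direct sum $\mathbb{A}_i^T$, i.e., has finite support. For $s \geq M$ both $b_s$ and $V'_s$ vanish and the equation reduces to $a_s = U^{H'_{s-1}} a_{s-1}$, giving $\mathrm{gr}(a_s) = \mathrm{gr}(a_{s-1}) - 2 H'_{s-1}$. Since $H'_{s-1} \to \infty$ as $s \to \infty$, the cumulative grading drop is unbounded, eventually forcing $\mathrm{gr}(a_s) < 0$ and hence $a_s = 0$ in $\mathcal{T}^+$, after which all subsequent terms vanish. I expect this finite-support bookkeeping to be the main obstacle, as the algebraic heart of the proof --- the surjectivity of $U^n$ on $\mathcal{T}^+$ --- removes every per-step obstruction.
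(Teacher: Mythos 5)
Your proposal is correct and takes essentially the same route as the paper: both construct an explicit preimage by solving the two-term recursion $b_s=U^{V'_s}a_s+U^{H'_{s-1}}a_{s-1}$ step by step and use the growth of $H_k$ (and $V_k$) to guarantee finite support, the only difference being that the paper anchors the construction at $s=0,-1$ and propagates in both directions via the section $U^{-n}$ of $U^n$, while you anchor below the support of $\{b_s\}$ and propagate upward. One small repair: $c_s$ need not be homogeneous, so choose the preimage under $U^{V'_s}$ componentwise (equivalently, apply the standard section $U^{-V'_s}\co\mathcal T^+\to\mathcal T^+$), which changes nothing in your finite-support argument; likewise your side remark should say ``module annihilated by a fixed power of $U$'' rather than ``$U$-torsion module''.
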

\begin{proof}
Suppose
$$\mbox{\boldmath${\eta}$}=\{(s,\eta_s)\}_{s\in\mathbb Z}\in H_*(\mathbb B^+_i).$$
Let
$$\xi_{-1}=U^{-H_{\lfloor\frac{i+p(-1)}q\rfloor}}\eta_{0},\quad \xi_{0}=0.$$
Here $\xi_{-1}=U^{-H_{\lfloor\frac{i+p(-1)}q\rfloor}}\eta_{0}$ means that $\xi_{-1}$ is an element with $U^{H_{\lfloor\frac{i+p(-1)}q\rfloor}}\xi_{-1}=\eta_{0}$.
For other $s$, let
$$
\xi_{s}=\bigg\{
\begin{array}{ll}
U^{-V_{\lfloor\frac{i+ps}q\rfloor}}(\eta_{s}-U^{H_{\lfloor\frac{i+p(s-1)}q\rfloor}}\xi_{s-1}),&\text{if }s>0,\\
U^{-H_{\lfloor\frac{i+ps}q\rfloor}}(\eta_{s+1}-U^{V_{\lfloor\frac{i+p(s+1)}q\rfloor}}\xi_{s+1}),&\text{if }s<-1.
\end{array}
$$
By the definition of direct sum, $\eta_s=0$ when $|s|\gg0$. Using the facts that
$$H_{\lfloor\frac{i+p(s-1)}q\rfloor}-V_{\lfloor\frac{i+ps}q\rfloor}\to+\infty, \text{ as }s\to+\infty,$$
and
$$V_{\lfloor\frac{i+p(s+1)}q\rfloor}-H_{\lfloor\frac{i+ps}q\rfloor}\to+\infty, \text{ as }s\to-\infty,$$
we see that $\xi_s=0$ when $|s|\gg0$. So $\mbox{\boldmath${\xi}$}=\{(s,\xi_s)\}_{s\in\mathbb Z}\in \mathcal A_i^T$.
Clearly
$$\mathfrak D_{i,p/q}^T(\mbox{\boldmath${\xi}$})=\mbox{\boldmath${\eta}$}.$$
\end{proof}

Our key idea is the following lemma.

\begin{lem}\label{lem:Tpart}
Suppose $p/q>0$. Under the identification
$$H_*(\mathbb{X}^+_{i,p/q})\cong HF^+(S^3_{p/q}(K),i),$$
$U^n HF^+(S^3_{p/q}(K),i)$ is identified with a subgroup of the homology of the mapping cone of $D_{i,p/q}^T$ when $n\gg0$.
\end{lem}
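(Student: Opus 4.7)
The plan is to realise $\X^T_{i,p/q}$, the mapping cone of $D^T_{i,p/q}$, as a subcomplex of $\X^+_{i,p/q}$ into which multiplication by a sufficiently high power of $U$ lands, and then to exploit injectivity of the resulting inclusion on homology.

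I would first specify the codomain of $D^T_{i,p/q}$. Since $v^+_k$ and $h^+_k$ are $U$-equivariant and $A^T_k = U^N A^+_k$ for some fixed large $N$, their restrictions to $A^T_k$ take values in $B^T := U^N B^+$. Setting $\B^T_i := \bigoplus_{s}(s,B^T)$, the map $D^T_{i,p/q}$ is then naturally a chain map $\A^T_i \to \B^T_i$, and its mapping cone $\X^T_{i,p/q}$ is an $\mathbb F_2[U]$-subcomplex of $\X^+_{i,p/q}$. The quotient module $\X^+_{i,p/q}/\X^T_{i,p/q}$ is annihilated by $U^N$, because both $\A^+_i/\A^T_i = \A^+_i/U^N\A^+_i$ and $\B^+_i/\B^T_i = \B^+_i/U^N\B^+_i$ are. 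Consequently, for every $n \ge N$, multiplication by $U^n$ on $\X^+_{i,p/q}$ factors at the chain level through the inclusion $\iota\co \X^T_{i,p/q} \hookrightarrow \X^+_{i,p/q}$; passing to homology and setting $\psi := \iota_*$, the composition
\[
H_*(\X^+_{i,p/q}) \longrightarrow H_*(\X^T_{i,p/q}) \xrightarrow{\;\psi\;} H_*(\X^+_{i,p/q})
\]
equals multiplication by $U^n$, so $U^n H_*(\X^+_{i,p/q})$ lies in the image of $\psi$.

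Next I would check that $\psi$ is injective. Under the abuse of notation identifying $\A^+_i,\B^+_i$ with their homology, the mapping cones carry zero differentials on the $\A$ and $\B$ factors, so their homology splits as $H_*(\X^+_{i,p/q})=\Ker(D^+_{i,p/q})\oplus\Coker(D^+_{i,p/q})$ and $H_*(\X^T_{i,p/q})=\Ker(D^T_{i,p/q})\oplus\Coker(D^T_{i,p/q})$. By Lemma~\ref{lem:>0Surj} the latter cokernel vanishes, and $\psi$ reduces to the inclusion $\Ker(D^T_{i,p/q})\hookrightarrow\Ker(D^+_{i,p/q})$, which is tautologically injective. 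Combined with the previous paragraph, this identifies $U^n HF^+(S^3_{p/q}(K),i)$ via $\psi$ with a subgroup of $H_*(\X^T_{i,p/q})$ for every $n\ge N$. The only substantive ingredient is the injectivity of $\psi$, which via the mapping cone splitting reduces to the surjectivity already proved in Lemma~\ref{lem:>0Surj}; everything else is a formal manipulation of mapping cones and of the $\mathbb F_2[U]$-module structure.
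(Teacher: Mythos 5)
Your proof is correct and takes essentially the same route as the paper: both arguments hinge on Lemma~\ref{lem:>0Surj} together with the containment $U^n\mathbb A^+_i\subseteq\mathbb A^T_i$, the paper simply phrasing it as the direct element check that $U^n\ker D^+_{i,p/q}\subseteq\ker D^T_{i,p/q}$ (both homologies being kernels once surjectivity is known) rather than via your subcomplex inclusion and factorization of $U^n$. One small remark: since $B^+\cong\mathcal T^+$ is $U$-divisible, $\mathbb B^T_i=\mathbb B^+_i$, so your mapping cone of $D^T_{i,p/q}$ coincides with the one the lemma refers to.
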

\begin{proof}
By Lemma~\ref{lem:>0Surj}, $\mathfrak D_{i,p/q}^T$ is surjective, hence $\mathfrak D_{i,p/q}^+$ is also surjective. By the exact triangle (\ref{eq:TriangleMC}),
we conclude that $HF^+(Y_{p/q}(K),i)\cong \ker\mathfrak D_{i,p/q}^+$.
Suppose $\mbox{\boldmath${\xi}$}\in U^n\ker\mathfrak D_{i,p/q}^+$ for $n\gg0$, then $\mbox{\boldmath${\xi}$}\in U^n H_*(\mathbb A_{i,p/q}^+)=\mathcal A_{i,p/q}^T$.
Hence $\mbox{\boldmath${\xi}$}$, being an element in $\ker\mathfrak D_{i,p/q}^+$, is actually an element in $\ker\mathfrak D_{i,p/q}^T$. This proves that
$U^n\ker\mathfrak D_{i,p/q}^+$ is a subgroup of the homology of the mapping cone of $D_{i,p/q}^T$.
\end{proof}

Now we are ready to prove Proposition~\ref{prop:Corr} stated in the introduction.

\begin{proof}[Proof of Proposition~\ref{prop:Corr}]
Lemmas~\ref{lem:VHMono} and~\ref{lem:V0H0} imply that
\begin{equation}\label{eq:si}
\begin{array}{ll}
H_{\lfloor \frac{i+p(s-1)}q\rfloor}\ge H_0=V_0\ge V_{\lfloor \frac{i+ps}q\rfloor} &\text{if }s>0,\\
H_{\lfloor \frac{i+p(s-1)}q\rfloor}\le H_0=V_0\le V_{\lfloor \frac{i+ps}q\rfloor} &\text{if }s<0.
\end{array}
\end{equation}

Given $\xi\in\mathcal T^+$, define
$$\rho(\xi)=\{(s,\xi_s)\}_{s\in\mathbb Z}$$
as follows. If
\begin{equation}\label{eq:V>H}
V_{\lfloor\frac{i}q\rfloor}\ge H_{\lfloor\frac{i+p(-1)}q\rfloor},
\end{equation}
let
$$\xi_{-1}=U^{V_{\lfloor\frac{i}q\rfloor}-H_{\lfloor\frac{i+p(-1)}q\rfloor}}\xi,\quad \xi_{0}=\xi;$$
if
\begin{equation}\label{eq:V<H}
V_{\lfloor\frac{i}q\rfloor}< H_{\lfloor\frac{i+p(-1)}q\rfloor},
\end{equation}
let
$$\xi_{-1}=\xi,\quad \xi_{0}=U^{H_{\lfloor\frac{i+p(-1)}q\rfloor}-V_{\lfloor\frac{i}q\rfloor}}\xi.$$
For other $s$, using (\ref{eq:si}), let
$$
\xi_s=\bigg\{
\begin{array}{ll}
U^{H_{\lfloor\frac{i+p(s-1)}q\rfloor}-V_{\lfloor\frac{i+ps}q\rfloor}}\xi_{s-1},&\text{if }s>0,\\
U^{V_{\lfloor\frac{i+p(s+1)}q\rfloor}-H_{\lfloor\frac{i+ps}q\rfloor}}\xi_{s+1},&\text{if }s<-1.
\end{array}
$$
As argued in the proof of Lemma~\ref{lem:>0Surj}, $\xi_s=0$ when $|s|\gg0$.
Then $\rho$ maps $\mathcal T^+$ into $\ker\mathfrak D^T_{i,p/q}$ and $U\rho(\mathbf 1)=0$. In light of Lemma~\ref{lem:Tpart}, the grading of $\rho(\mathbf1)$ is $d(S^3_{p/q}(K),i)$.

If (\ref{eq:V>H}) holds, the map $$\mathfrak v^+_{\lfloor \frac{i}q\rfloor}\co(0,\mathfrak A^T_{\lfloor \frac{i}q\rfloor})\to(0,\mathfrak B^+)$$ is
$U^{V_{\lfloor \frac{i}q\rfloor}}$.
Using Remark~\ref{rem:AbGr} and comparing (\ref{eq:GrL(p,q)}), the grading of $\rho(\mathbf1)$ can be computed by
$$d(L(p,q),i)-2V_{\lfloor \frac{i}q\rfloor}.$$
If (\ref{eq:V<H}) holds,
the map $$\mathfrak h^+_{\lfloor \frac{i+p(-1)}q\rfloor}\co(-1,\mathfrak A^T_{\lfloor \frac{i+p(-1)}q\rfloor})\to(0,\mathfrak B^+)$$ is
$U^{H_{\lfloor \frac{i+p(-1)}q\rfloor}}$.
The grading of $\rho(\mathbf1)$ can be computed by
$$d(L(p,q),i)-2H_{\lfloor \frac{i+p(-1)}q\rfloor}.$$
\end{proof}

\begin{rem}
The argument in the proof of Lemma~\ref{lem:V0H0} implies that $V_k=H_{-k}$ for any $k\in\mathbb Z$. So Proposition~\ref{prop:Corr} may be stated as
$$d(S^3_{p/q}(K),i)=d(L(p,q),i)-2\max\{V_{\lfloor\frac{i}q\rfloor},V_{\lfloor\frac{p+q-1-i}q\rfloor}\}.$$
\end{rem}

\begin{proof}[Proof of Theorem~\ref{thm:Grading}]
The first part of Theorem~\ref{thm:Grading} immediately follows from Proposition~\ref{prop:Corr}.

If $d(S^3_{p/q}(K),i)=d(L(p,q),i)$ for all $i$, then
\begin{equation}\label{eq:MaxVH=0}
\max\{V_{\lfloor\frac{i}q\rfloor},H_{\lfloor\frac{i+p(-1)}q\rfloor}\}=0
\end{equation}
for all $i$. In particular, $V_{0}=0$. It follows from Lemma~\ref{lem:V0H0} that $H_0=0$.

If $V_0=H_0=0$, then (\ref{eq:MaxVH=0}) holds for all $i$. So $d(S^3_{p/q}(K),i)=d(L(p,q),i)$.
\end{proof}


\section{Casson--Walker, Casson--Gordon  invariants and the correction term}\label{sect:CGCW}

\subsection{Casson--Walker invariant}

The Casson invariant is one of the many invariants of a closed three-manifold $Y$ that can be obtained by studying representations of its fundamental group in a certain non-abelian group $G$.
Roughly speaking, the Casson invariant of an integral homology sphere $Y$ is obtained by counting representations of $\pi_1(Y)$ in $G=SU(2)$.  The geometric structure used to obtain a topological invariant is a Heegaard splitting of $Y$.  An alternative gauge-theoretical approach uses flat bundles together with a Riemannian metric on $Y$ and leads to a refinement of the Casson invariant, the Floer homology.

By extending Casson's $SU(2)$ intersection theory to include
reducible representations, Walker extended the Casson invariant to
rational homology spheres.  Most remarkably, Walker's invariant
admits a purely combinatorial definition in terms of surgery
presentations.  The following proposition is the special case of a
more general surgery formula, when $K$ is a null-homologous knot
in a rational homology sphere $Y$ (see \cite[Theorem 2.8]{BL}).
Our convention here is that $\lambda(S^3_{+1}(T))=1$, where $T$ is
the right hand trefoil.

\begin{prop}
 Let $K$ be a null-homologous knot in a rational homology three-sphere $Y$, and let $L(p,q)$ be the lens space obtained by $(p/q)$--surgery on the unknot in $S^3$.  Then
\begin{equation}\label{eq:surgery1}
\lambda(Y_{p/q}(K))=\lambda(Y)+\lambda(L(p,q))+\frac{q}{2p}\Delta_K''(1).
\end{equation}
Here, the Alexander polynomial $\Delta_K$ is normalized to be symmetric and
satisfy $\Delta_K(1)=1$.
\end{prop}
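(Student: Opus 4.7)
The plan is to obtain the formula as a specialization of Walker's general rational surgery formula, following the strategy of Boyer--Lines \cite{BL}. The argument rests on three ingredients: additivity of $\lambda$ under connect sum, the classical Casson formula for integer (equivalently $1/n$) surgery on a null-homologous knot, and the continued-fraction reduction of rational surgery to a chain of integer surgeries.

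First I would verify the formula in two special cases that together pin down every term on the right-hand side. Taking $K$ to be the unknot $O$ in $Y$, we have $Y_{p/q}(O)=Y\#L(p,q)$; since $\Delta_O\equiv1$ the Alexander correction vanishes, and the formula reduces to the connect-sum additivity $\lambda(Y\#L(p,q))=\lambda(Y)+\lambda(L(p,q))$, which is a standard property of $\lambda$. Taking $Y=S^3$ reduces the identity to the classical Casson--Walker formula $\lambda(S^3_{p/q}(K))=\lambda(L(p,q))+\frac{q}{2p}\Delta_K''(1)$ for knots in the three-sphere, which follows from Casson's $1/n$-surgery formula combined with the Dedekind-sum expression for $\lambda(L(p,q))$.

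For the general case, I would present $Y_{p/q}(K)$ as integer surgery on a link: attach a string of meridional unknots $U_1,\dots,U_n$ with framings given by a continued-fraction expansion of $p/q$, so that blowing them down yields $p/q$-surgery on $K$ in $Y$. One then applies Walker's integer surgery formula iteratively, either by repeated blow-downs or by a sequence of Rolfsen twists. The contributions from the unknot components assemble, independently of $K$, into the term $\lambda(L(p,q))$, while the unique step that sees the knot $K$ contributes a rational multiple of $\Delta_K''(1)$, with the factor $q/(2p)$ emerging from the continued-fraction arithmetic.

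The main obstacle is bookkeeping: one must check that the ``slope piece'' $\lambda(L(p,q))$ and the ``knot piece'' $\frac{q}{2p}\Delta_K''(1)$ separate exactly, with no cross-terms, and that the coefficient $q/(2p)$ is genuinely universal, i.e.\ independent of the ambient rational homology sphere $Y$. The null-homology hypothesis on $K$ is essential at exactly this point, since it is what guarantees that $\Delta_K(T)$ is well defined and depends only on $K$, not on how $K$ is embedded in $Y$. Once this universality is established, matching the two special cases above with the general integer surgery calculus forces the formula into the stated shape.
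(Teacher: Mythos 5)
First, note that the paper does not prove this proposition at all: it is quoted from Boyer--Lines \cite{BL} as the specialization, to null-homologous knots, of their general surgery formula for the Casson--Walker invariant. So the relevant comparison is between your sketch and the argument that would actually be required, and there your proposal has a genuine gap. The inductive engine you propose --- replace $p/q$-surgery on $K$ by integer surgeries on a chain $K\cup\mu_1\cup\dots\cup\mu_{n-1}$ of meridians with framings from a continued fraction, then apply ``Casson's integer surgery formula'' at each stage --- invokes a formula that does not apply to the knots appearing in the induction. Casson's classical formula covers $1/n$-surgeries on knots in integral homology spheres; already the first step, integer framing $a_1$ with $|a_1|>1$ on $K$, produces a rational homology sphere and requires Walker's extension (this is exactly the $q=1$ case of the proposition being proved). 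Worse, every subsequent chain component is a meridian, which is \emph{not} null-homologous in the intermediate surgered manifold (it generates the torsion in $H_1$), so no ``null-homologous knot'' surgery formula applies to it; handling such knots is precisely the content of Walker's and Boyer--Lines' general formula, in which the Dedekind-sum terms (equivalently $\lambda(L(p,q))$, cf.\ Proposition~\ref{LensWalker}) arise. In other words, the step you treat as bookkeeping presupposes the theorem of \cite{BL} that one would simply specialize directly.

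Two smaller points. Verifying the formula for $K$ the unknot and for $Y=S^3$ is a consistency check, not a derivation: two special cases plus an unproven claim that the coefficient of $\Delta_K''(1)$ is ``universal'' do not force the general identity, and the universality claim is exactly what the general surgery formula provides, not something that follows from the null-homology hypothesis alone. The honest short proof is the one the paper implicitly takes: quote the general rational surgery formula of \cite{BL} (valid for rationally null-homologous knots, with correction terms expressed through Dedekind sums) and observe that for a null-homologous knot those terms collapse to $\lambda(L(p,q))+\frac{q}{2p}\Delta_K''(1)$, with the normalization $\lambda(S^3_{+1}(T))=1$ fixed by the trefoil computation.
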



\begin{defn}
Given two coprime numbers $p$ and $q$, the {\it Dedekind sum} $s(q,p)$ is $$s(q,p):=\text{sign}(p)\cdot \sum^{|p|-1}_{k=1}((\frac{k}{p}))((\frac{kq}{p})),$$
where $$ ((x))=\begin{cases} x-\lfloor x\rfloor-\frac{1}{2}, & \text{if $x \notin \Z$}, \\
0, & \text{if $x \in \Z$},  \end{cases}$$
\end{defn}

The next proposition also follows from \cite[Theorem 2.8]{BL}.

\begin{prop}\label{LensWalker}
 For a lens space $L(p,q)$, $\lambda(L(p,q))=-\frac12s(q,p)$.
\end{prop}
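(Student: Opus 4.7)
The plan is to verify the formula using a continued fraction presentation of $L(p,q)$ combined with Dedekind reciprocity. Write $p/q = [a_1,\ldots,a_n]^-$ as a negative continued fraction, so that $L(p,q)$ is obtained by integer surgeries with coefficients $a_1,\ldots,a_n$ on a linear chain of unknots in $S^3$. Starting from the base case $\lambda(L(1,q)) = \lambda(S^3) = 0$, where $s(q,1)=0$ tautologically, I would induct on the length of the continued fraction.

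The inductive step compares $\lambda(L(p,q))$ to $\lambda(L(p',q'))$, where $(p',q')$ corresponds to deleting the last entry of the continued fraction, using the surgery formula for $\lambda$. The change is controlled by a small Dedekind sum coming from the surgery coefficient and a slam-dunk identity. Matching the telescoping sum against Dedekind reciprocity,
\[
s(q,p) + s(p,q) = -\frac{1}{4} + \frac{1}{12}\!\left(\frac{p}{q} + \frac{q}{p} + \frac{1}{pq}\right),
\]
completes the induction. As a normalization check, one would verify the formula on $L(2,1)$ by applying the surgery formula of the preceding proposition to $(\pm 2)$-surgery on the unknot and comparing against the paper's normalization $\lambda(S^3_{+1}(T)) = 1$.

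The main obstacle is that the surgery formula in the preceding proposition is stated for null-homologous knots, while after a few surgeries the remaining components of the chain are no longer null-homologous in the intermediate rational homology sphere. One must therefore either invoke the stronger form of Walker's general surgery formula (the route taken in \cite{BL}), or rearrange the surgery picture by slam-dunks so that each step performs surgery on a null-homologous knot in the appropriate intermediate manifold. Once this is arranged, the remaining argument reduces to bookkeeping with Dedekind sums and their reciprocity.
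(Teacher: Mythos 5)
The paper offers no internal proof here: Proposition~\ref{LensWalker} is quoted directly from Boyer--Lines \cite{BL}, so the only question is whether your sketch would stand on its own. Its main weakness is a circularity in the role you assign to the surgery formula (\ref{eq:surgery1}). That formula already carries the lens-space contribution $\lambda(L(p,q))$ as a black-box term on its right-hand side, so it cannot drive an induction whose goal is to evaluate that very term: applied to the unknot it reads $\lambda(L(p,q))=\lambda(S^3)+\lambda(L(p,q))+0$, a tautology, and your proposed normalization check on $L(2,1)$ via $(\pm2)$--surgery on the unknot is vacuous for the same reason. Your second escape route --- rearranging the chain by slam-dunks so that each step is surgery on a null-homologous knot --- does not exist: after surgering the first components, the remaining chain components are cores of Heegaard solid tori of the intermediate lens spaces, hence generate $H_1$ and can never be made null-homologous, while slam-dunking the whole chain just returns you to rational surgery on the unknot in $S^3$ and the tautology above.

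What does work is your first alternative, and only that one: invoke Walker's general rational-surgery formula, in which the correction term attached to the framing is an explicit Dedekind-sum (or equivalently continued-fraction) expression, independent of any prior knowledge of $\lambda$ of lens spaces. Evaluating that formula on the unknot chain, with the base case $\lambda(S^3)=0$ and the normalization $\lambda(S^3_{+1}(T))=1$, and telescoping via Dedekind reciprocity, is precisely the computation carried out in \cite{BL}; at that point your argument is not an independent proof but a reproduction of the cited one. If you want a genuinely self-contained verification, you must either prove the general Walker formula (well beyond the scope here) or compute $\lambda(L(p,q))$ directly from Walker's combinatorial definition; as written, the induction has no non-circular engine.
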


When $p,q >0$, write $p/q$ as a continued fraction $$\frac{p}{q}=[a_1,\cdots, a_n]=a_1-\frac{1}{a_2-\displaystyle\frac{1}{
a_3-\displaystyle\frac{1}{\ddots}
}}.$$
We learn from  Rasmussen \cite[Lemma 4.3]{RasLens} that the Casson--Walker invariant of $L(p,q)$ can be calculated alternatively by the formula
\begin{equation}\label{eq:DedekindSum}
s(q,p)=\frac{1}{12}(\frac{q}{p}+\frac{q'}{p}+\sum_{i=1}^n(a_i-3))
\end{equation}
where $0<q'<p$ is the unique integer such that $qq'\equiv 1 \pmod p$.

\begin{lem}\label{lem:CWzero}
The Casson--Walker invariant of a lens space $\lambda(L(p,q))$
vanishes if and only if $q^2\equiv -1 \pmod p$.
\end{lem}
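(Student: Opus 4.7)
By Proposition~\ref{LensWalker}, $\lambda(L(p,q))=0$ if and only if the Dedekind sum $s(q,p)=0$, so the goal reduces to proving the equivalence $s(q,p)=0 \iff q^2 \equiv -1 \pmod p$. We may assume $p>1$ and choose the representative $0<q<p$, and let $0<q'<p$ be the integer with $qq'\equiv 1 \pmod p$ appearing in Rasmussen's formula (\ref{eq:DedekindSum}).

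For the forward direction, the plan is to exploit Rasmussen's formula
$$12\, s(q,p) = \frac{q+q'}{p} + \sum_{i=1}^{n}(a_i-3).$$
If $s(q,p)=0$, then $\frac{q+q'}{p}$ equals the integer $-\sum(a_i-3)$. Since $1\le q,q'\le p-1$, we have $\frac{q+q'}{p}\in(0,2)$, so the only integer it can be is $1$. This forces $q'=p-q$, and substituting into $qq'\equiv 1 \pmod p$ yields $-q^2\equiv 1 \pmod p$, i.e.\ $q^2\equiv -1 \pmod p$.

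For the reverse direction, I would work directly from the Dedekind-sum definition and exploit the substitution $k\mapsto kq \pmod p$, which is a bijection on $\{1,\dots,p-1\}$. Under $m\equiv kq \pmod p$ we have $k\equiv mq^{-1} \pmod p$; if $q^2\equiv -1 \pmod p$ then $q^{-1}\equiv -q \pmod p$, so
$$s(q,p)=\sum_{k=1}^{p-1}((k/p))((kq/p)) = \sum_{m=1}^{p-1}((mq^{-1}/p))((m/p)) = \sum_{m=1}^{p-1}(-((mq/p)))((m/p)) = -s(q,p),$$
using that $((\cdot))$ is odd modulo $1$. Hence $s(q,p)=0$.

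There is no serious obstacle here; the proof is essentially a one-line integrality argument in one direction and a one-line change-of-variables in the other. The only subtle point is the range check $0<q+q'<2p$ (so the forced integer value of $\frac{q+q'}{p}$ is uniquely $1$), which silently uses $p>1$; I would briefly address the trivial case $p=1$, where $L(1,q)\cong S^3$ has $\lambda=0$ and $q^2\equiv-1\equiv 0 \pmod 1$ automatically.
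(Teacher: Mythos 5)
Your proof is correct, and the forward direction is essentially the paper's: both reduce to $s(q,p)=0$ via Proposition~\ref{LensWalker} and then use the integrality of $\sum_i(a_i-3)$ in formula (\ref{eq:DedekindSum}) to force $q+q'\equiv 0\pmod p$, hence $q^2\equiv -qq'\equiv -1\pmod p$ (your extra observation that $\frac{q+q'}{p}$ must equal $1$ is harmless but not needed). The reverse direction is where you genuinely diverge. The paper argues topologically: by the classification of lens spaces, $L(p,q)$ is orientation-preservingly homeomorphic to $L(p,q')$, so $\lambda(L(p,q))=\lambda(L(p,q'))$; when $q^2\equiv-1\pmod p$ one has $q'\equiv -q$, and since $L(p,-q)=-L(p,q)$ and $\lambda$ is odd under orientation reversal, $\lambda(L(p,q))=-\lambda(L(p,q))=0$. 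You instead stay entirely inside the arithmetic of Dedekind sums: the change of variables $k\mapsto kq\pmod p$ gives $s(q,p)=s(q',p)$, and the oddness of the sawtooth function $((\cdot))$ turns $q'\equiv -q$ into $s(q,p)=-s(q,p)$. The two routes are the same identity seen from different sides (the paper's homeomorphism invariance of $\lambda$ encodes exactly $s(q,p)=s(q',p)$ and $s(-q,p)=-s(q,p)$); your version is more elementary and self-contained, needing no input from the classification of lens spaces or the behavior of $\lambda$ under orientation reversal, while the paper's is shorter given those standard facts. Your handling of the $p=1$ case and the range check $0<q+q'<2p$ is careful and correct, though the paper sidesteps it by only using divisibility rather than the exact value.
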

\begin{proof}
If $\lambda(L(p,q))=0$, we must have $q+q' \equiv 0 \pmod p$ in view of formula (\ref{eq:DedekindSum}).  Together with the definition of $q'$, we immediately see $$q^2 \equiv -qq'\equiv -1 \pmod p.$$

On the other hand, it is well known from the classification result of lens spaces that $L(p,q)$ is orientation-preserving homeomorphic to $L(p,q')$.  Hence, $\lambda(L(p,q))=\lambda(L(p,q'))$.  If $q^2\equiv -1\pmod p$, then $q'\equiv -q \pmod p$; so we have $\lambda(L(p,q))=\lambda(L(p,-q))=-\lambda(L(p,q))$.  This implies $\lambda(L(p,q))=0.$
\end{proof}

The Casson--Walker invariant is closely related to the correction
terms and the Euler characteristic of $HF_{\mathrm{red}}$. The
following theorem is established as \cite[Theorem 3.3]{Rustamov},
whose special case was also known in \cite[Theorem~5.1]{OSzAbGr}.

\begin{thm}\label{thm:Rust}
For a rational homology sphere $Y$, we have
$$ |H_1(Y;\Z)|\lambda(Y)=\sum_{\s\in \Spinc(Y)}(\chi(HF_{\mathrm{red}}(Y,\s))-\frac{1}{2}d(Y,\s)).$$
\end{thm}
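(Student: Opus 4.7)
The plan is to prove this by induction on surgery complexity, using the base cases $Y=S^3$ and $Y=L(p,q)$ together with the Casson--Walker surgery formula (\ref{eq:surgery1}) and the rational surgery formula from Section~2. For $Y=S^3$ both sides vanish trivially. For $Y=L(p,q)$ the reduced Heegaard Floer homology is zero in every Spin$^c$ structure, so using Proposition~\ref{LensWalker} the identity reduces to
$$\sum_{i=0}^{p-1} d(L(p,q),i) = p\cdot s(q,p),$$
which can be verified recursively via a continued fraction expansion of $p/q$, compatibly with (\ref{eq:DedekindSum}).

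For the inductive step, write $Y=Y'_{p/q}(K)$ where $K$ is a null-homologous knot in a rational homology sphere $Y'$ for which the identity is already known. The left-hand side transforms via (\ref{eq:surgery1}), contributing the terms $p\lambda(L(p,q))$, $\tfrac{q}{2}\Delta_K''(1)$, and $(p-1)\lambda(Y')$. On the right, the correction-term contribution is controlled by the generalization of Proposition~\ref{prop:Corr} to rational homology spheres (stated immediately after that proposition), which expresses $d(Y,(\s,i))-d(Y',\s)-d(L(p,q),i)$ in terms of the stabilization constants $V_k, H_k$ of $CFK^{\infty}(K)$. The Euler characteristic $\chi(HF_{\mathrm{red}}(Y,(\s,i)))$ is read off from the mapping cone $\mathbb{X}^+_{i,p/q}$, whose homology computes $HF^+(Y,\s)$ as an $\mathbb{F}_2[U]$-module.

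The main obstacle is to verify, after summing over all Spin$^c$ structures, that the change in $\sum_\s \chi(HF_{\mathrm{red}}(Y,\s))$ and the change in $-\tfrac{1}{2}\sum_\s d(Y,\s)$ combine to produce exactly the $\tfrac{q}{2}\Delta_K''(1)$ term in (\ref{eq:surgery1}), modulo lens space contributions already handled in the base case. This amounts to an algebraic identity on the knot Floer complex relating $\sum_k(V_k+H_k-2V_0)$ and the graded Euler characteristics of subquotients of $CFK^{\infty}$ to $\Delta_K''(1)$. The key inputs are the symmetry $(i,j)\leftrightarrow (j,i)$ of $CFK^{\infty}$ (cf.\ the proof of Lemma~\ref{lem:V0H0}), the identification of the graded Euler characteristic of $\widehat{HFK}$ with the symmetrized Alexander polynomial, and the elementary identity $\sum_s s^2 a_s=\Delta_K''(1)$ where $\Delta_K(T)=\sum_s a_s T^s$. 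This is essentially the content of Rustamov's argument via Reidemeister--Turaev torsion, and is where most of the real work lies.
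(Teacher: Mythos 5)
First, a point of comparison: the paper does not prove Theorem~\ref{thm:Rust} at all. It is quoted as \cite[Theorem 3.3]{Rustamov} (with a special case in \cite[Theorem 5.1]{OSzAbGr}) and used as a black box, so your proposal has to stand entirely on its own. As written, it does not: it is an outline whose central step is explicitly deferred. The heart of the theorem is precisely the claim that the change of $\sum_{\mathfrak s}\bigl(\chi(HF_{\mathrm{red}}(Y,\mathfrak s))-\tfrac12 d(Y,\mathfrak s)\bigr)$ under $p/q$--surgery matches the $\tfrac{q}{2}\Delta_K''(1)$ term of (\ref{eq:surgery1}) plus lens-space contributions; you describe this as ``an algebraic identity on the knot Floer complex'' relating $V_k,H_k$ and Euler characteristics of subquotients of $CFK^\infty$ to $\Delta_K''(1)$, and then say it ``is essentially the content of Rustamov's argument \dots where most of the real work lies.'' That identity --- computing $\chi$ of the mapping cone $\mathbb X^+_{i,p/q}$, relating $\chi(A_{k,\mathrm{red}})$ and the numbers $V_k,H_k$ to the torsion coefficients of the Alexander polynomial, and summing over $i$ and over $\mathfrak s$ --- is exactly what a proof of the theorem consists of, and it is not carried out. (The lens-space base case $\sum_i d(L(p,q),i)=p\,s(q,p)$ is also only asserted, though that part is indeed a routine continued-fraction induction compatible with (\ref{eq:DedekindSum}).)

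There is also a structural gap in the induction itself. You need every rational homology sphere to arise as $Y'_{p/q}(K)$ with $K$ null-homologous in a rational homology sphere $Y'$ already covered by the induction, and your bookkeeping of the left-hand side (the increments $p\lambda(L(p,q))$, $\tfrac q2\Delta_K''(1)$, $(p-1)\lambda(Y')$) silently assumes $|H_1(Y';\Z)|=1$, since the theorem's left side is $|H_1(Y;\Z)|\lambda(Y)=p\,|H_1(Y';\Z)|\,\lambda(Y)$. If $Y'$ must be an integral homology sphere, the class of manifolds reached is far from all rational homology spheres; if $Y'$ is allowed to be a general rational homology sphere, no argument is given that such a presentation by a \emph{null-homologous} knot exists, and a general surgery presentation (a link in $S^3$, processed one component at a time) produces intermediate manifolds that may have $b_1>0$ and remaining components that are not null-homologous, so (\ref{eq:surgery1}) and the rational surgery formula of Section~2 (both stated only for null-homologous knots) do not apply. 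Handling this --- via Walker's general surgery formula or a torsion argument --- is part of what Rustamov actually does, so both the reduction scheme and the key identity still need proofs before this can be considered more than a plausible plan.
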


\subsection{Casson--Gordon invariant}

 Let us recall the following $G$--signature theorem for closed four-manifolds \cite[Proposition~6.18]{AS}.

\begin{thm}[$G$--signature Theorem]
Suppose $ \tilde{X} \xrightarrow{\pi} X$ is an $m$--fold cyclic
cover of closed four-manifolds branched over a closed surface $F$
in $X$.  Then,
$$\operatorname{sig}(\tilde{X})=m\cdot\operatorname{sig} (X)-[F]^2\cdot
\frac{m^2-1}{3m}.$$ Here
$[F]^2=\langle\mathrm{PD}^{-1}([F])\smile\mathrm{PD}^{-1}([F]),[X]\rangle$.
\end{thm}

Consider a closed oriented three-manifold $Y$ with $H_1(Y;\Z)=\Z_m$.  It has a unique $m$--fold cyclic cover $\tilde{Y} \rightarrow Y$.  Pick up an $m$--fold cyclic branched covering of four-manifold $\tilde{W} \rightarrow W$, branched over a properly embedded surface $F$ in $W$, such that $\partial(\tilde{W}\rightarrow W)=(\tilde{Y}\rightarrow Y)$.  The existence of such $(W,F)$ follows from \cite[Lemma 2.2]{CG}.

\begin{defn}
The total Casson--Gordon invariant of $Y$ is given by $$\tau(Y) = m \cdot \text{sig}(W)-\text{sig}(\tilde{W})-[F]^2\cdot\frac{m^2-1}{3m}. $$
\end{defn}

It is a standard argument to see the independence of the definition on the choice of the four-manifolds cover $\tilde{W} \rightarrow W$.  Suppose $\tilde{W'} \rightarrow W'$ is another cover that bounds $\tilde{Y} \rightarrow Y$, then we can construct a branched cover $-\tilde{W'}\cup_{\tilde{Y}} \tilde{W} \rightarrow -W' \cup_Y W$ of closed four-manifolds.  It follows readily from Novikov additivity and the $G$--signature Theorem that the invariant is well defined.

\begin{defn}
Let $K$ be a knot in an integral homology sphere $Y$.  The {\it generalized signature function} $\sigma_K(\xi)$ is the signature of the matrix $A(\xi):=(1-\bar{\xi})A+(1-\xi)A^{T}$ for a Seifert matrix $A$ of $K$, where $|\xi|=1$.
\end{defn}

A surgery formula for the total Casson--Gordon invariant was established in \cite[Lemma 2.22]{BL}.

\begin{prop}
Let $K$ be a knot in an integral homology sphere $Y$, then
\begin{equation}\label{eq:surgery2}
 \tau(Y_{p/q}(K))=\tau(L(p,q))-\sigma(K,p).
\end{equation}
where $\sigma(K,p)=\sum_{r=1}^{p-1}\sigma_K(e^{2i\pi r/p})$.
\end{prop}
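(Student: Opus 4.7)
The plan is to compare $Y_{p/q}(K)$ directly with $L(p,q)$ via parallel four-manifold constructions, and then reduce the identity to a classical signature-defect formula for cyclic branched covers along Seifert surfaces.

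\textbf{Parallel fillings.} Fix a continued fraction expansion $p/q=[a_1,\ldots,a_n]$, and let $P$ denote the associated linear plumbing of four-dimensional $2$--handles. Since $Y$ is an integer homology sphere, it bounds a smooth four-manifold $M$. I would build $W_K$ by attaching $P$ to $M$ so that the first $2$--handle is attached along $K\subset Y$, and in parallel build $W_O$ by attaching the same $P$ to $B^4$ along the unknot $O\subset S^3$. Then $\partial W_K=Y_{p/q}(K)$ and $\partial W_O=L(p,q)$, and schematically $W_K=M\cup P$ and $W_O=B^4\cup P$, with the plumbing piece identical on both sides.

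\textbf{Surfaces and branched covers.} Because $K$ is null-homologous, the plan is to pick a Seifert surface $\Sigma$ for $K$, push it into the interior of $M$ using the Seifert framing, and cap off $\partial\Sigma=K$ with the core disk of the first $2$--handle. This produces a closed oriented surface $F_K\subset W_K$ whose class mod $p$ classifies the desired $p$--fold cyclic branched cover of $W_K$ and restricts on the boundary to the (unbranched) $p$--fold cyclic cover of $Y_{p/q}(K)$. The same construction starting from the spanning disk of $O$ produces $F_O\subset W_O$. A framing bookkeeping then gives $[F_K]^2=a_1=[F_O]^2$, and the piece of the branched cover lying over the common plumbing $P$ is the same three-manifold $\tilde P$ in both constructions.

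\textbf{Reduction by Novikov additivity.} Substituting $(W_K,F_K,\tilde W_K)$ and $(W_O,F_O,\tilde W_O)$ into the definition of $\tau(\cdot)$ and subtracting, the $[F]^2$ corrections cancel. Writing $\tilde W_K=\tilde M\cup\tilde P$ and $\tilde W_O=B^4\cup\tilde P$ (the cover of $B^4$ branched over a pushed-in disk is just $B^4$), Novikov additivity yields
\[
\tau(Y_{p/q}(K))-\tau(L(p,q))=p\,\sig(M)-\sig(\tilde M),
\]
where $\tilde M\to M$ is the $p$--fold cyclic cover of $M$ branched over the pushed-in $\Sigma$.

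\textbf{The classical identity, and the main obstacle.} To finish, I would invoke the standard signature-defect formula: for any four-manifold $M$ bounding an integer homology sphere and containing a pushed-in Seifert surface of a knot $K\subset\partial M$, one has $p\,\sig(M)-\sig(\tilde M)=-\sigma(K,p)$. This is proved by doubling $(M,\Sigma)$ along the boundary to obtain a closed pair whose surface has zero self-intersection, applying the $G$--signature theorem to the resulting closed cover, and identifying the contribution of each eigenspace of the $\mathbb{Z}/p$ deck-transformation action on middle cohomology with a Tristram--Levine signature $\sigma_K(e^{2\pi i r/p})$. The main obstacle is bookkeeping rather than conceptual: one must verify that $\tilde W_K$ really splits along a common three-manifold so that Novikov additivity applies, pin down sign and framing conventions in the $[F_K]^2=[F_O]^2$ calculation, and check that the branched covers restrict compatibly on the boundary. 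Combining the displayed identities then gives $\tau(Y_{p/q}(K))=\tau(L(p,q))-\sigma(K,p)$, as claimed.
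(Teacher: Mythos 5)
First, a remark on the comparison you asked for: the paper does not actually prove this proposition --- it is quoted from Boyer--Lines \cite{BL} --- so your argument has to be judged on its own merits, and it has a genuine gap precisely in the rational case, which is the whole content of the statement. In your parallel construction $W_K=M\cup P$, with $P$ the chain of $2$--handles realizing $p/q=[a_1,\dots,a_n]$, the second component of the chain is a meridian $u_2$ of $K$. Up to sign, $u_2$ is a meridian of your capped surface $F_K$: push its small meridian disk slightly into $M$ and it meets the pushed-in Seifert surface once. Since the core of the handle attached along $u_2$ is disjoint from $F_K$, attaching that handle kills the meridian of $F_K$ in $H_1(W_K\setminus F_K)$. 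Equivalently, the capped core of the second handle is a class of $H_2(W_K)$ meeting $[F_K]$ once, so the intersection pairing against $[F_K]$ is onto and $H_1(W_K\setminus F_K)\cong H_1(W_K)$, which vanishes if you take $H_1(M)=0$. Consequently, for $n\ge 2$ (i.e.\ $q>1$) there is no $\mathbb Z/p$ cyclic cover of $W_K$ branched along $F_K$ at all: every homomorphism $H_1(W_K\setminus F_K)\to\mathbb Z/p$ kills the meridian of $F_K$, and its restriction to the boundary kills the generator $\mu_K$ of $H_1(Y_{p/q}(K))\cong\mathbb Z/p$, so the induced boundary cover is the disconnected trivial one rather than the cyclic cover $\tilde Y\to Y$ required in the definition of $\tau$. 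The same objection applies verbatim to $(W_O,F_O)$: $F_O$ is the first vertex sphere of the plumbing, which the second vertex sphere meets once. So the assertion that ``the class of $F_K$ mod $p$ classifies the desired branched cover'' is false for $q>1$; your construction reduces correctly to the classical Casson--Gordon-type argument only when $q=1$, and the genuinely rational case --- the new content of the formula --- is not reached. Repairing it requires a different choice of $(W,F)$ (a surface whose complement still carries the right $\mathbb Z/p$ cover restricting to $\tilde Y\to Y$), not just bookkeeping.

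A secondary, fixable point: Novikov additivity applies to splittings along closed $3$--manifolds, whereas $M\cap P$ is a union of solid tori (where Wall non-additivity can in principle enter). The natural splitting is $W_K=M\cup_Y(\text{trace of the surgery})$, but then the splitting $3$--manifold $Y$ meets $F_K$ in $K$, so the relevant piece of the cover over it is the branched cover of $(Y,K)$; this is how the integer-surgery computation is usually organized and should be stated explicitly. Finally, the ``classical identity'' $p\,\sig(M)-\sig(\tilde M)=-\sigma(K,p)$ needs the hypothesis that the cover of $M$ branched over the pushed-in Seifert surface exists and is the right one (e.g.\ choose $M$ with $H_1(M)=0$), and the sign conventions relating $\sigma_K$, $\tau(L(p,q))$ and the orientation of $L(p,q)=S^3_{p/q}(O)$ must be pinned down; but these are minor compared with the nonexistence of the branched covers in the rational case.
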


Quite amazingly, the total Casson--Gordon invariant of the lens
space $L(p,q)$ is also related to the Dedekind sum.
\begin{prop}\label{LensGordon}
For a lens space $L(p,q)$, $\tau(L(p,q))=-4p\cdot s(q,p)$.
\end{prop}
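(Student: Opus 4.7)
The plan is to compute $\tau(L(p,q))$ by applying the Atiyah--Singer $G$--signature theorem to a $4$--manifold equipped with a $\mathbb Z_p$--action, and then invoke the classical cotangent expression for the Dedekind sum to convert the resulting sum into $-4p\,s(q,p)$.

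First I would realize $L(p,q) = S^3/\mathbb Z_p$, where $\mathbb Z_p$ acts on $S^3 \subset \mathbb C^2$ by $(z_1,z_2) \mapsto (\zeta z_1, \zeta^q z_2)$ with $\zeta = e^{2\pi i/p}$. This action extends to $B^4$ with a single fixed point at the origin, so $\widetilde W = B^4$ is a $p$--fold cyclic cover of the orbifold $W = B^4/\mathbb Z_p$, with $\partial(\widetilde W \to W) = (S^3 \to L(p,q))$. This gives an orbifold version of the cover required in the definition of $\tau$; to land in the smooth setting of the paper, one replaces $W$ by its Hirzebruch--Jung resolution (the linear plumbing associated to $p/q = [a_1,\ldots,a_n]$) and takes $F$ to be a component of the exceptional divisor.

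Next I would close up to apply the $G$--signature theorem. Doubling along $L(p,q)$ and $S^3$ produces the closed $4$--manifold $X = S^4$ with a $\mathbb Z_p$--action having two isolated fixed points with rotation weights $(1,q)$ and $(p-1,p-q)$. The standard cotangent form of the equivariant signature at isolated fixed points gives $\sig_{g^k}(X) = -2\cot(\pi k/p)\cot(\pi kq/p)$ for $k \neq 0$, and since $\sig(S^4) = 0$,
\[
\sig(X/\mathbb Z_p) = \frac{1}{p}\sum_{k=1}^{p-1} \sig_{g^k}(X) = -\frac{2}{p}\sum_{k=1}^{p-1}\cot\!\left(\tfrac{\pi k}{p}\right)\cot\!\left(\tfrac{\pi kq}{p}\right).
\]
Since $X/\mathbb Z_p$ is the double of $W$ along $L(p,q)$, Novikov additivity yields $\sig(W) = \tfrac12\sig(X/\mathbb Z_p)$; combined with $\sig(\widetilde W) = \sig(B^4) = 0$ and the vanishing (resp. cancellation) of the $[F]^2$--correction in the orbifold (resp. resolved) setting, the definition of $\tau$ then unpacks to
\[
\tau(L(p,q)) = -\sum_{k=1}^{p-1}\cot\!\left(\tfrac{\pi k}{p}\right)\cot\!\left(\tfrac{\pi k q}{p}\right).
\]
Applying the classical identity $s(q,p) = \tfrac{1}{4p}\sum_{k=1}^{p-1}\cot(\pi k/p)\cot(\pi kq/p)$ finishes the proof.

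The hard part is reconciling the clean orbifold cover (whose branch locus is a single cone point) with the smooth-branch-surface definition of $\tau$ used in the paper: after resolving via the plumbing $[a_1,\ldots,a_n]$, both $p\,\sig(W) - \sig(\widetilde W)$ and the correction $[F]^2(p^2-1)/(3p)$ become nontrivial functions of the $a_i$, and one must verify that their combination still equals the cotangent sum above (which, via (\ref{eq:DedekindSum}), corresponds to a direct continued-fraction computation). The tidiest route around this is to invoke Kawasaki's orbifold extension of the $G$--signature theorem, so that the cone point may be handled directly; the well-definedness of $\tau$, established in the paper from Novikov additivity and the $G$--signature theorem, then guarantees that this agrees with the smooth formulation.
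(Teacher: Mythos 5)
First, a remark on provenance: the paper does not prove Proposition~\ref{LensGordon} at all --- it quotes the result from Boyer--Lines --- so there is no ``paper proof'' for your argument to parallel; judged on its own, your derivation has a genuine error at exactly the step that is supposed to produce the cotangent sum. The closed manifold obtained by doubling is $X=S^4$, and $H^2(S^4)=0$, so every equivariant signature $\sig(g^k,X)$ vanishes identically; it is not $-2\cot(\pi k/p)\cot(\pi kq/p)$. This is consistent with the $G$--signature theorem because, once the two copies of $B^4$ are oriented compatibly, the rotation data at the two fixed points are $(1,q)$ and $(1,-q)$ (exactly one weight changes sign under the orientation reversal of the second copy), so the two local cotangent contributions cancel rather than add; your assignment $(1,q)$ and $(p-1,p-q)=(-1,-q)$ would make them add, contradicting $\sig(g^k,S^4)=0$. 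The next step collapses for the same structural reason: the double of any compact $4$--manifold along its boundary has zero signature, and Novikov additivity gives $\sig(X/\Z_p)=\sig(W)+\sig(-W)=0$, which carries no information about $\sig(W)$; the equation $\sig(W)=\tfrac12\sig(X/\Z_p)$ is not a consequence of additivity and is false in general. Hence the key identity $\tau(L(p,q))=-\sum_{k=1}^{p-1}\cot(\pi k/p)\cot(\pi kq/p)$ is never established; the only solid ingredient is the classical identity $s(q,p)=\tfrac1{4p}\sum_{k=1}^{p-1}\cot(\pi k/p)\cot(\pi kq/p)$, i.e.\ the translation step, not the computation of $\tau$.

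Moreover, what you label ``the hard part'' is in fact the entire content. The cover $B^4\to B^4/\Z_p$ is branched over an isolated point, so it does not fit the definition of $\tau$ used in the paper, and the claim that the $[F]^2$--correction ``vanishes'' there has no meaning until you either resolve --- in which case $\widetilde W$ is no longer $B^4$, and $\sig(W)$, $\sig(\widetilde W)$, $[F]^2$ all become nontrivial functions of the continued fraction $[a_1,\dots,a_n]$, so the deferred bookkeeping is precisely the proof --- or invoke an index theorem for manifolds with boundary or orbifolds (Atiyah--Patodi--Singer, Kawasaki), where the local defect of a cone point of type $\tfrac1p(1,q)$ is itself the cotangent sum you are trying to derive, not something obtainable from the closed-manifold theorem applied to $S^4$. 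A correct route is the classical one behind the reference the paper cites: compute the total Casson--Gordon invariant of $L(p,q)$ from an explicit branched cover bounding $(S^3\to L(p,q))$, for instance the linear plumbing associated to $p/q$ with its cyclic branched cover, as in Casson--Gordon and Boyer--Lines, and then convert to $-4p\,s(q,p)$ via Dedekind reciprocity or the cotangent identity.
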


\subsection{Cosmetic surgeries with slopes of opposite signs}

In this subsection, we derive an obstruction for purely cosmetic
surgeries with slopes of opposite signs. Recall that both
$\mathrm{Spin}^c(S^3_{p/q}(K))$ and $\mathrm{Spin}^c(L(p,q))$ are
identified with $\mathbb Z/p\mathbb Z$. This leads to an explicit
identification of $\mathrm{Spin}^c(S^3_{p/q}(K))$ with
$\mathrm{Spin}^c(L(p,q))$ in the statement of the next
proposition.

\begin{prop}\label{prop:obst}
Given $p,q_1,q_2>0$ and a knot $K$ in $S^3$.  If
$Z=S^3_{p/q_1}(K)\cong S^3_{-p/q_2}(K)$ as oriented manifolds,
then
$$\Delta''_K(1)=0,$$
$$\sum_{\s\in \Spinc(Z)}\chi(HF_{\mathrm{red}}(Z,\s))=0,$$ and there exists a one-to-one correspondence  $$\sigma: \Spinc(L(p,q_1))\rightarrow\Spinc(L(p,q_2))$$ such that $$d(S^3_{p/q_1}(K),\s)=d(L(p,q_1),\s)=d(S^3_{-p/q_2}(K),\sigma(\s))
=-d(L(p,q_2),\sigma(\s))$$ for every $\s$.
\end{prop}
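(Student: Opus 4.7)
The plan is to combine three ingredients: the correction-term bound of Theorem~\ref{thm:Grading}, the surgery formulas~\eqref{eq:surgery1} and~\eqref{eq:surgery2} for the Casson--Walker and total Casson--Gordon invariants, and Rustamov's formula (Theorem~\ref{thm:Rust}).

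First I would compute $\tau(Y)$ in two ways via~\eqref{eq:surgery2}, writing $-p/q_2=p/(-q_2)$ and using $\tau(L(p,-q_2))=-\tau(L(p,q_2))$: the $\sigma(K,p)$ contributions cancel, leaving $\tau(L(p,q_1))+\tau(L(p,q_2))=0$. Comparing Propositions~\ref{LensWalker} and~\ref{LensGordon} gives the identity $\tau(L(p,q))=8p\,\lambda(L(p,q))$, so $\lambda(L(p,q_1))+\lambda(L(p,q_2))=0$. Performing the same calculation for $\lambda(Y)$ through~\eqref{eq:surgery1} and invoking this identity forces $\tfrac{q_1+q_2}{2p}\Delta_K''(1)=0$. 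Since $q_1+q_2>0$ this yields $\Delta_K''(1)=0$, and hence $\lambda(Y)=\lambda(L(p,q_1))=-\lambda(L(p,q_2))$.

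Next I would sandwich the correction terms. Theorem~\ref{thm:Grading} applied to $K$ at slope $p/q_1$ gives the upper bound $d(Y,\s)\le d(L(p,q_1),\s)$ via the canonical identification $\Spinc(Y)\cong\Spinc(L(p,q_1))$. For the negative slope, apply Theorem~\ref{thm:Grading} to the mirror $\bar K$ at slope $p/q_2$; since $S^3_{p/q_2}(\bar K)=-Y$ and $d(-M,\cdot)=-d(M,\cdot)$, this produces a bijection $\sigma\co\Spinc(L(p,q_1))\to\Spinc(L(p,q_2))$ together with the lower bound $d(Y,\s)\ge -d(L(p,q_2),\sigma(\s))$. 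Applying Theorem~\ref{thm:Rust} to a lens space (an $L$--space with $HF_{\mathrm{red}}=0$) gives the identity $\sum_i d(L(p,q),i)=-2p\,\lambda(L(p,q))$, so summing each of the two bounds over $\Spinc(Y)$ yields the same value $-2p\,\lambda(L(p,q_1))$. Both bounds on $\sum_\s d(Y,\s)$ therefore coincide, forcing equality in every individual inequality and producing the full chain of equalities in the proposition.

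Finally, plugging $\lambda(Y)=\lambda(L(p,q_1))$ and $\sum_\s d(Y,\s)=-2p\,\lambda(L(p,q_1))$ into Theorem~\ref{thm:Rust} applied to $Y$ yields $\sum_\s\chi(HF_{\mathrm{red}}(Y,\s))=0$. The main obstacle is the Spin$^c$ bookkeeping in the sandwich step: the two canonical identifications $\Spinc(Y)\cong\Spinc(L(p,q_i))$ arising from the two surgery descriptions of $Y$ must be tracked carefully, so that the individual bounds assemble into a clean global sum. Once these identifications are matched correctly, the pinching argument to produce $\sigma$ is automatic.
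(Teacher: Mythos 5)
Your proposal is correct and follows essentially the same route as the paper: the Casson--Gordon surgery formula cancels $\sigma(K,p)$ and equates the Dedekind sums, the Casson--Walker formula then forces $\Delta_K''(1)=0$, and Theorem~\ref{thm:Grading} (applied directly for $p/q_1>0$ and via the mirror for the negative slope) is pinched against Rustamov's Theorem~\ref{thm:Rust} to produce the equalities of correction terms and $\sum_{\s}\chi(HF_{\mathrm{red}}(Y,\s))=0$. The only difference is cosmetic: you pin down the $d$--invariants first by summing the lens-space bounds and then read off the vanishing of $\sum_{\s}\chi$ from Rustamov applied to $Y$, whereas the paper derives $\sum_{\s}\chi\le 0$ and $\ge 0$ from the two surgery presentations and then concludes equality everywhere; the ingredients and logic are the same.
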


\begin{proof}
Using the surgery formulae (\ref{eq:surgery1}) (\ref{eq:surgery2}), we can compute the Casson--Walker and Casson--Gordon invariants of $Z$ from its two surgery presentations and obtain
$$\lambda(Z)=\lambda(L(p,q_1))+\frac{q_1}{2p}\Delta_K''(1)=
    \lambda(L(p,-q_2))+\frac{-q_2}{2p}\Delta_K''(1),$$
$$\tau(Z)=\tau(L(p,q_1))-\sigma(K,p)=\tau(L(p,-q_2))-\sigma(K,p).$$
  In light of Proposition \ref{LensWalker} and \ref{LensGordon}, we must have $\Delta''_K(1) = 0$ hence $$\lambda(Z)=\lambda(S^3_{p/q_1}(K))=\lambda(L(p,q_1)).$$
This, according to Theorem~\ref{thm:Rust}, implies
$$\sum_{\s\in \Spinc(Z)}(\chi(HF_{\mathrm{red}}(Z,\s)-\frac{1}{2}d(Z,\s))=\sum_{\s \in \Spinc(L(p,q_1))}-\frac{1}{2}d(L(p,q_1),\s).$$
It follows from Theorem~\ref{thm:Grading} that $$d(S^3_{p/q_1}(K),\s)\leq d(L(p,q_1),\s)$$ for any knot $K$ and $p/q_1>0$.  Therefore, $$\sum_{\s\in \Spinc(Z)}\chi(HF_{\mathrm{red}}(Z,\s))\leq 0.$$

On the other hand, $$\lambda(Z)=\lambda(S^3_{-p/q_2}(K))=\lambda(L(p,-q_2)).$$
Again, this implies
$$\sum_{\s\in \Spinc(Z)}(\chi(HF_{\mathrm{red}}(Z,\s)-\frac{1}{2}d(Z,\s))=\sum_{\s \in \Spinc(L(p,-q_2))}-\frac{1}{2}d(L(p,-q_2),\s).$$
With negative surgery coefficient $-p/q_2$,
Theorem~\ref{thm:Grading} implies that
$$d(S^3_{-p/q_2}(K),\s)=-d(S^3_{p/q_2}(\overline
K),\s)\geq-d(L(p,q_2),\s)= d(L(p,-q_2),\s).$$  Therefore,
$$\sum_{\s\in \Spinc(Z)}\chi(HF_{\mathrm{red}}(Z,\s))\geq 0.$$
This implies $$\sum_{\s\in
\Spinc(Z)}\chi(HF_{\mathrm{red}}(Z,\s))= 0$$ and
$$d(S^3_{p/q_i}(K),\s)= d(L(p,q_i),\s)$$ for $i=1,2$ and every
$\s$.
\end{proof}

It is a natural question to ask what three-manifolds may be
obtained via purely cosmetic surgeries on knots in $S^3$.  The
above obstruction enables us to eliminate the following class
 of three-manifolds that includes all Seifert fibred rational homology spheres.

\begin{cor}
 If $Z$ is a plumbed three-manifold of a negative-definite graph with at most one bad point, then $Z$ can not be obtained via purely cosmetic surgeries on knots in $S^3$.
\end{cor}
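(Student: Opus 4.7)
The plan is to argue by contradiction, combining Proposition~\ref{prop:obst} with Ozsv\'ath--Szab\'o's parity computation of $HF^+$ for plumbed three-manifolds \cite{OSzPlumb}. Suppose $Y$ arises as a purely cosmetic surgery on a knot $K\subset S^3$. By Wu's theorem recalled in the introduction (or directly by Theorem~\ref{thm:OpSlope}(a)), the two surgery slopes must have opposite signs, so the hypotheses of Proposition~\ref{prop:obst} are met, and we obtain the key identity $\sum_{\s\in\Spinc(Y)}\chi(HF_{\mathrm{red}}(Y,\s))=0$.

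Next, I would invoke the main theorem of \cite{OSzPlumb}: when $Y$ bounds a negative-definite plumbing with at most one bad vertex, $HF^+(Y,\s)$ is supported in a single parity of the absolute $\Q$-grading for every $\Spinc$ structure $\s$. Equivalently, $HF_{\mathrm{red}}(Y,\s)$ is concentrated in gradings of the form $d(Y,\s)+2k$ with $k\ge 0$, so $\chi(HF_{\mathrm{red}}(Y,\s))$ equals $\pm\dim_{\mathbb F_2}HF_{\mathrm{red}}(Y,\s)$ with a sign that is independent of $\s$. Together with the vanishing of the total sum, this positivity forces $HF_{\mathrm{red}}(Y,\s)=0$ for every $\s$, i.e.\ $Y$ is an $L$--space. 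But Wu \cite{Wu} ruled out purely cosmetic surgeries on knots in $S^3$ whose result is an $L$--space, yielding the desired contradiction.

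The only point that requires real attention is the bookkeeping in the second step: one must confirm that the plumbing parity statement indeed produces a single, $\s$-independent sign for $\chi(HF_{\mathrm{red}}(Y,\s))$, so that the terms in the sum cannot cancel among themselves. Once that is settled, the conclusion is immediate from non-negativity and, in particular, covers the Seifert fibered rational homology spheres singled out in the statement, whose plumbing graphs are star-shaped with the central node as the only possible bad vertex.
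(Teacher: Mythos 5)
Your argument is correct and is essentially the paper's proof, just run in the contrapositive order: the paper splits into the cases $HF_{\mathrm{red}}(Y)\neq0$ (where \cite[Corollary~1.4]{OSzPlumb} forces $\sum_{\s}\chi(HF_{\mathrm{red}}(Y,\s))=\mathrm{rank}\,HF_{\mathrm{red}}(Y)\neq0$, contradicting Proposition~\ref{prop:obst}) and $HF_{\mathrm{red}}(Y)=0$ (handled by \cite{Wu}), which is exactly your ``sum vanishes, parity forces $L$--space, Wu excludes $L$--spaces'' chain. The sign issue you flag is settled by the cited result: all of $HF^+(Y)$ sits in even $\Z/2\Z$ grading, so each $\chi(HF_{\mathrm{red}}(Y,\s))$ is $+\dim HF_{\mathrm{red}}(Y,\s)$ with the same sign for every $\s$.
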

\begin{proof}
 By \cite[Corollary 1.4]{OSzPlumb}, all elements of $HF^+(Z)$ have even $\Z/2\Z$ grading.  This implies that in the case $HF_{\mathrm{red}}(Z)\neq 0$, it must be that
$$\sum_{\s\in \Spinc(Z)}\chi(HF_{\mathrm{red}}(Z,\s))=\mathrm{rank}\:HF_{\mathrm{red}}(Z)\neq 0,$$
hence we can apply Proposition~\ref{prop:obst}.
 The other case where $HF_{\mathrm{red}}(Z)=0$ follows from discussions in \cite{Wu}.
\end{proof}


\section{Proof of the main theorem}\label{sect:Main}
Let
$$\widehat{A}_k=C\{ \max\{ i,j-k\}=0\},\quad\widehat{B}=C\{ i=0\}$$
and
$$\nu(K)=\min\left\{k\in\mathbb
Z\left|\widehat{v}_k\co\widehat{A}_k\to\widehat{CF}(S^3) \text{
\small induces a non-trivial map in homology}\right.\right\}$$ be
the knot invariant defined by Ozsv\'ath--Szab\'o \cite{OSzRatSurg}.

\begin{prop}\label{prop:nu<=0}
Suppose that $K\subset S^3$ is a knot with $V_0=H_0=0$. Then
$\nu(K)\le0$.
\end{prop}

\begin{proof}
Consider the commutative diagram
$$
\begin{CD}
\widehat A_k&@>i_A>>&A^+_k\\
@V\widehat v_kVV&&@Vv^+_kVV\\
\widehat B&@>i_B>>&B^+
\end{CD}
$$
and the induced commutative diagram of homology.
Since $U\mathbf 1=0$, $\mathbf 1\in\mathfrak  A^+_k$ is in the image of $(i_A)_*$.
Since $V_0=0$, $\mathfrak v^+_0(\mathbf 1)=\mathbf 1$. The above commutative diagram shows that the induced map $(\widehat v_0)_*$ is nontrivial in homology. Thus $\nu(K)\le0$.
\end{proof}

\begin{proof}[Proof of Theorem~\ref{thm:OpSlope}]
By the result in \cite{Wu}, we only need to consider the case that $r_1,r_2$ have opposite signs. Suppose $r_1=p/q_1$ and $r_2=-p/q_2$, where $p,q_1,q_2$ are positive integers, $\gcd(p,q_1)=\gcd(p,q_2)=1$.
By Proposition~\ref{prop:obst}, $d(S^3_{p/q_1}(K),i)=d(L(p,q_1),i)$.
Theorem~\ref{thm:Grading} implies that $V_0=H_0=0$.
By Proposition~\ref{prop:nu<=0}, $\nu(K)\le0$. Since $\nu(K)=\tau(K) \text{ or }\tau(K)+1$ (see \cite[Lemma~9.2]{OSzRatSurg} and \cite{OSz4Genus,RasThesis}), $\tau(K)\le0$. The same argument can be applied to $\overline{K}$ to show that $\tau(\overline K)\le0$.
Since $\tau(\overline K)=-\tau(K)$, we must have $\tau(K)=0$.

Since $\nu(K)=\tau(K) \text{ or }\tau(K)+1$ and $\nu(K)\le0$, we must have $\nu(K)=0$. The same argument can be applied to $\overline K$ to show that $\nu(\overline K)=0$. So we can apply \cite[Proposition~9.9]{OSzRatSurg} to conclude that $r_1=-r_2$.

Using Proposition~\ref{prop:obst} and (\ref{eq:surgery1}), we conclude that $$\lambda(L(p,q_1))=\lambda(S^3_{p/q_1}(K))=\lambda(S^3_{-p/q_1}(K))=\lambda(L(p,-q_1))=-\lambda(L(p,q_1)).$$
So $\lambda(L(p,q_1))=0$. The fact that $q_1^2\equiv-1\pmod p$ follows from Lemma~\ref{lem:CWzero}.
\end{proof}


\section{The computation of $HF_{\mathrm{red}}$}\label{sect:Red}

In order to get more information about the knot $K$, we need to consider the reduced Heegaard Floer homology $HF_{\mathrm{red}}$ of the surgered manifolds. If $K$ admits purely cosmetic surgeries, our computation (Proposition~\ref{prop:Const}) shows that $HF_{\mathrm{red}}(S^3_{p/q}(K))$ looks like the Heegaard Floer homology of the surgery on an amphicheiral knot. Thus it provides more evdidence to Conjecture~\ref{conj:Cosmetic}
for knots in $S^3$.

Let $$\mathfrak A_{k,\mathrm{red}}=\mathfrak A_k^+/\mathfrak A^T_k$$ and
$$\mathcal A_{i,\mathrm{red}}=\bigoplus_{s\in\mathbb Z}(s,\mathfrak A_{\lfloor\frac{i+ps}q\rfloor,\mathrm{red}}(K)),$$
we have:

\begin{prop}\label{prop:RedIsom}
Suppose $K\subset S^3$ is a knot with $V_0=H_0=0$.  If either
$$p/q>0$$ or $$p/q<0,\quad d(S^3_{p/q}(K),i)=d(L(p,q),i),$$ then
$$\mathcal A_{i,\mathrm{red}}\cong HF_{\mathrm{red}}(S^3_{p/q}(K),i)$$ and the isomorphism preserves the absolute grading.
\end{prop}

\begin{lem}\label{lem:NoTors}
Suppose $K\subset S^3$ is a knot with $V_0=H_0=0$. If $p/q>0$, then $\mathfrak D^T_{i,p/q}$ is surjective and its kernel is isomorphic to $\mathcal T^+$;
if $p/q<0$, then $\mathfrak D^T_{i,p/q}$ is injective and its cokernel is isomorphic to $\mathcal T^+$.
\end{lem}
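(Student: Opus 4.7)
Write $k_s=\lfloor(i+ps)/q\rfloor$, so that the $s$-component of the equation $D^T_{i,p/q}((a_s))=(\eta_s)$ on the towers reads
$$U^{V_{k_s}}a_s+U^{H_{k_{s-1}}}a_{s-1}=\eta_s,\qquad a_s,\eta_s\in\mathcal T^+.$$
The hypothesis $V_0=H_0=0$ combined with Lemmas~\ref{lem:VHMono} and~\ref{lem:V0H0} gives $V_k=0$ for $k\ge 0$ and $H_k=0$ for $k\le 0$, so in each individual equation at most one of $V_{k_s}$, $H_{k_{s-1}}$ is positive; consequently the system is triangular in $(a_s)$ and splits as a forward recursion (where $V_{k_s}=0$) and a backward recursion (where $H_{k_{s-1}}=0$). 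The sign of $p/q$ controls whether there is a single index at which both powers vanish (a free direction) or a single index at which both are potentially positive (a hidden compatibility).

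For $p/q>0$ the sequence $k_s$ is nondecreasing, and $0\le i\le p-1$ forces $k_s\ge 0$ for $s\ge 0$ and $k_{s-1}\le -1$ for $s\le 0$. Hence the $s\ge 1$ equations give the forward recursion $a_s=\eta_s+U^{H_{k_{s-1}}}a_{s-1}$, the $s\le -1$ equations give the backward recursion $a_{s-1}=\eta_s+U^{V_{k_s}}a_s$, and at $s=0$ both $U$-powers vanish, reducing to $a_0+a_{-1}=\eta_0$. For surjectivity, I set $a_0=0$, $a_{-1}=\eta_0$, and propagate the two recursions outward. In the homogeneous case the $s=0$ equation forces $a_{-1}=a_0$ and the recursions determine the rest from $a_0\in\mathcal T^+$; mapping $\xi\mapsto(a_s)$ with $a_0=a_{-1}=\xi$ then gives the required isomorphism $\mathcal T^+\xrightarrow{\sim}\ker D^T_{i,p/q}$.

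For $p/q<0$ the sequence $k_s$ is nonincreasing. Let $s^*$ be the largest integer with $k_{s^*}\ge 0$; then the forward recursion covers all $s\le s^*$ and the backward recursion covers all $s\ge s^*+2$, while the single remaining equation
$$U^{V_{k_{s^*+1}}}a_{s^*+1}+U^{H_{k_{s^*}}}a_{s^*}=\eta_{s^*+1}$$
generically has both $U$-powers positive. For injectivity, running the homogeneous recursions inward from $a_s=0$ at $|s|\gg 0$ forces $a_s=0$ for $s\le s^*$ and for $s\ge s^*+1$, and the $s^*+1$ equation is then satisfied automatically. For the cokernel, a general $(\eta_s)$ produces $a^{\mathrm{fwd}}_{s^*}$ depending only on $(\eta_s)_{s\le s^*}$ and $a^{\mathrm{bwd}}_{s^*+1}$ depending only on $(\eta_s)_{s\ge s^*+2}$; the defect
$$\pi((\eta_s))=\eta_{s^*+1}+U^{V_{k_{s^*+1}}}a^{\mathrm{bwd}}_{s^*+1}+U^{H_{k_{s^*}}}a^{\mathrm{fwd}}_{s^*}$$
vanishes precisely when $(\eta_s)$ lies in the image of $D^T_{i,p/q}$, and feeding $(\eta_s)$ supported only at $s=s^*+1$ shows $\pi$ is surjective onto $\mathcal T^+$. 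Hence $\mathrm{coker}\,D^T_{i,p/q}\cong\mathcal T^+$.

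The main technical point in both cases is verifying that each recursively constructed sequence actually lies in the direct sum $\mathbb A^T_i$, i.e.\ vanishes for $|s|\gg 0$. This is the same termination argument that appears in the proof of Lemma~\ref{lem:>0Surj}: since $V_{k_s}\to\infty$ as $s\to-\infty$, $H_{k_{s-1}}\to\infty$ as $s\to+\infty$, and every element of $\mathcal T^+$ is $U$-nilpotent, iterating either recursion eventually produces zero.
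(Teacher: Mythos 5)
Your argument is correct and is essentially the paper's own proof: the paper uses the same reduction $V_k=0$ for $k\ge0$, $H_k=0$ for $k\le0$ to make the system triangular, builds the same explicit section of the kernel when $p/q>0$ (its map $\sigma$, with $\xi_0=\xi_{-1}=\xi$), and when $p/q<0$ defines a functional $\phi$ on $\mathbb B_i^+$ that is exactly the closed-form expansion of your recursively defined defect $\pi$ (with the turning point fixed at $s=0$ rather than at your $s^*+1$, an immaterial bookkeeping difference), showing $\ker\phi=\mathrm{im}\,D^T_{i,p/q}$ and $\phi$ surjective. One cosmetic remark: the divergences $V_{k_s}\to\infty$ as $s\to-\infty$ and $H_{k_{s-1}}\to\infty$ as $s\to+\infty$ that you invoke are oriented for the case $p/q>0$ (for $p/q<0$ they occur in the opposite directions), but there your recursions involve only finitely many steps outside the support of $(\eta_s)$, so no termination argument is needed and nothing breaks.
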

\begin{proof}
We always suppose $p>0$ and $0\le i\le p-1$.
First consider the case that $p/q>0$. The surjectivity of $\mathfrak D^T_{i,p/q}$ is guaranteed by Lemma~\ref{lem:>0Surj}.
We define a map $$\sigma\co \mathcal T^+\to H_*(\mathbb A^+_i)$$
as follows. Given $\xi\in \mathcal T^+$, let $\sigma(\xi)=\{(s,\xi_s)\}_{s\in\mathbb Z}$, where
$$
\xi_s=\left\{
\begin{array}{ll}
\xi,&\text{if }s=0\text{ or }-1,\\
U^{H_{\lfloor\frac{i+p(s-1)}q\rfloor}-V_{\lfloor\frac{i+ps}q\rfloor}}\xi_{s-1}=U^{H_{\lfloor\frac{i+p(s-1)}q\rfloor}}\xi_{s-1},&\text{if }s>0,\\
U^{V_{\lfloor\frac{i+p(s+1)}q\rfloor}-H_{\lfloor\frac{i+ps}q\rfloor}}\xi_{s+1}=U^{V_{\lfloor\frac{i+p(s+1)}q\rfloor}}\xi_{s+1},&\text{if }s<-1.
\end{array}
\right.
$$
We claim that there is a short exact sequence $$
\begin{CD}
0@>>> \mathcal T^+@>\sigma>> \mathcal A_i^T @>\mathfrak D^T_{i,p/q}>> H_*(\mathbb B_i^+)@>>>0.
\end{CD}
$$
In fact, $\sigma$ is clearly injective and the image of $\sigma$ is in the kernel of $\mathfrak D^T_{i,p/q}$.
Suppose $\{(s,\xi_s)\}_{s\in\mathbb Z}$ is in the kernel of $\mathfrak D^T_{i,p/q}$, we want to show that it is in the image of $\sigma$.
Since $V_{\lfloor\frac iq \rfloor}=H_{\lfloor\frac{i+p(-1)}q \rfloor}=0$, one must have $\xi_{-1}=\xi_0$. Let $\xi=\xi_0$, then we can check $\sigma(\xi)=\{(s,\xi_s)\}$.
This finishes the proof of the case where $p/q>0$.

Next consider the case where $p/q<0$.
We have \begin{equation}\label{eq:q<0VH}
V_{\lfloor\frac{i+ps}q \rfloor}=0 \text{ when } s<0,\quad H_{\lfloor\frac{i+ps}q \rfloor}=0\text{ when }s\ge0.
\end{equation}
Suppose
$$\{(s,\xi_s)\}_{s\in\mathbb Z}$$
is in the kernel of $\mathfrak D^T_{i,p/q}$. Then
\begin{equation}\label{eq:XiKer}
U^{H_{\lfloor\frac{i+p(s-1)}q\rfloor}}\xi_{s-1}+U^{V_{\lfloor\frac{i+ps}q\rfloor}}\xi_{s}=0,
\end{equation}
for any $s\in\mathbb Z$.
By the definition of direct sum, $\xi_s=0$ when $|s|$ is sufficiently large. Suppose $\xi_s=0$ for some $s>0$, then
it follows from (\ref{eq:q<0VH}) and (\ref{eq:XiKer}) that $\xi_{s-1}=0$. So we have $\xi_s=0$ for all $s\ge0$. Similarly, $\xi_s=0$ for all $s<0$.
This proves that $\mathfrak D^T_{i,p/q}$ is injective.

Given
$$\mbox{\boldmath${\eta}$}=\{(s,\eta_s)\}_{s\in\mathbb Z}\in H_*(\mathbb B^+),$$
let
$$\phi(\mbox{\boldmath${\eta}$})=\eta_0+\sum_{s>0}U^{\sum_{j=1}^sV_{\lfloor\frac{i+p(j-1)}q
\rfloor}}\eta_s+\sum_{s<0}U^{\sum_{j=1}^{-s}H_{\lfloor\frac{i-pj}q
\rfloor}}\eta_s.$$ We claim that the sequence
$$
\begin{CD}
0@>>>\mathcal A_i^T @>\mathfrak D^T_{i,p/q}>>H_*(\mathbb B_i^+)@>\phi>>\mathcal T^+@>>>0.
\end{CD}
$$
is exact.  It is routine to check $\phi\circ\mathfrak  D^T_{i,p/q}=0$.
Moreover, suppose $\phi(\mbox{\boldmath${\eta}$})=0$. Let $M>0$ be
an integer such that $\eta_s=0$ whenever $|s|> M$. Define
$$
\xi_s=\left\{
\begin{array}{ll}
\eta_s, &\text{if }s\le -M,\\
\eta_s-U^{H_{\lfloor \frac{i+p(s-1)}q\rfloor}}\xi_{s-1}, &\text{if }-M<s<0,\\
\eta_{s+1},&\text{if }s\ge M-1,\\
\eta_{s+1}-U^{V_{\lfloor \frac{i+p(s+1)}q\rfloor}}\xi_{s+1}, &\text{if }0\le s<M-1.
\end{array}
\right.
$$
We can check $\mathfrak D^T_{i,p/q}\{(s,\xi_s)\}=\mbox{\boldmath${\eta}$}$, where at $(0,\eta_0)$ we use the fact that $\phi(\mbox{\boldmath${\eta}$})=0$.
This proves $\ker\phi=\mathrm{im}\:\mathfrak D^T_{i,p/q}$.
The image of $\phi$ is clearly $\mathcal T^+$, so $\mathcal T^+$ is isomorphic to the cokernel of $\mathfrak D^T_{i,p/q}$.
\end{proof}

\begin{proof}[Proof of Propostion~\ref{prop:RedIsom}]
When $p/q>0$, we can identify $HF^+(S^3_{p/q},i)$ with the kernel of $\mathfrak D^+_{i,p/q}$. Then there is a natural projection map
$$\pi\co HF^+(S^3_{p/q},i)\to\mathcal A_{i,\mathrm{red}}$$
We claim that there is a short exact sequence
$$
\begin{CD}
0@>>> \mathcal T^+@>\sigma>>HF^+(S^3_{p/q},i)@>\pi>>\mathcal A_{i,\mathrm{red}}@>>>0,
\end{CD}
$$
where $\sigma$ is the map defined in Lemma~\ref{lem:NoTors}.

From Lemma~\ref{lem:NoTors} we know that $\sigma$ is injective, and $\mathrm{im}\:\sigma\subset\mathrm{ker}\:\pi$.
If $\mbox{\boldmath${\xi}$}\in\ker\mathfrak D^+_{i,p/q}$ is in the kernel of $\pi$, then $\mbox{\boldmath${\xi}$}$ is contained in $\mathcal A^T_{i,p/q}$, so
$$\mbox{\boldmath${\xi}$}\in\ker\mathfrak D^T_{i,p/q}=\mathrm{im}\:\sigma.$$

Next we show that $\pi$ is surjective. Let $\pi'\co\mathcal A_i^+\to\mathcal A_{i,\mathrm{red}}$ be the projection map. We need to show that for any $\mbox{\boldmath${\zeta}$}\in \mathcal A_{i,\mathrm{red}}$,
there exists a $\mbox{\boldmath${\xi}$}\in\ker\mathfrak D^+_{i,p/q}$ with $\pi'(\mbox{\boldmath${\xi}$})=\mbox{\boldmath${\zeta}$}$. In fact, let $\mbox{\boldmath${\xi}$}_1$ be any element with
$\pi'(\mbox{\boldmath${\xi}$}_1)=\mbox{\boldmath${\zeta}$}$. Since $\mathfrak D^T_{i,p/q}$ is surjective, there exists $\mbox{\boldmath${\xi}$}_2\in\mathcal A^T_i$ with $$\mathfrak D^T_{i,p/q}(\mbox{\boldmath${\xi}$}_2)=\mathfrak D^+_{i,p/q}(\mbox{\boldmath${\xi}$}_1),$$ then $\mbox{\boldmath${\xi}$}=\mbox{\boldmath${\xi}$}_1-\mbox{\boldmath${\xi}$}_2$ is the element we want. This finishes the proof of the claim.

The claim immediately implies our conclusion when $p/q>0$.

When $p/q<0$, suppose $d(S^3_{p/q}(K),i)=d(L(p,q),i)$. We claim that
$$
\mathrm{im}\:\mathfrak D^+_{i,p/q}=\mathrm{im}\:\mathfrak D^T_{i,p/q}.
$$
Otherwise, $\mathrm{im}\:\mathfrak D^+_{i,p/q}$ is strictly larger than $\mathrm{im}\:\mathfrak D^T_{i,p/q}$. Then $\phi(\mathrm{im}\:\mathfrak D^+_{i,p/q})$ would contain a nonzero element,
where $\phi$ is the map defined in Lemma~\ref{lem:NoTors}. Hence $\mathbf1\in\phi(\mathrm{im}\:\mathfrak D^+_{i,p/q})$. By the exact triangle (\ref{eq:TriangleMC}), 
$U^nHF^+(S^3_{p/q}(K),i)$ is contained in $\mathrm{incl}_*(\mathrm{coker}\:\mathfrak D^+_{i,p/q})$ when $n\gg0$.
It follows that the bottommost element in $U^nHF^+(S^3_{p/q}(K),i)$ for $n\gg0$ has grading higher than the grading of $(0,\mathbf1)\in(0,B^+)$, which is $d(L(p,q),i)$. This gives a contradiction.

Now our conclusion easily follows from the claim and Lemma~\ref{lem:NoTors}.
\end{proof}

By the proof of Theorem~\ref{thm:OpSlope},
Proposition~\ref{prop:Const} is an easy corollary of the following proposition.

\begin{prop}
Suppose $K\subset S^3$ is a knot with $V_0=H_0=0$. Then there exists a constant $C=C(K)$, such that
$$\mathrm{rank}\:HF_{\mathrm{red}}(S^3_{p/q}(K))=|q|\cdot C,$$
for any coprime integers $p,q$ with $p/q>0$. Moreover, if $d(S^3_{p/q}(K),i)=d(L(p,q),i)$ for all $i$, then the above equality also holds for $p/q<0$.
\end{prop}
\begin{proof}
Let $C=\sum_{k\in\mathbb Z}\mathrm{rank}\:\mathfrak A_{k,\mathrm{red}}$. In
$$\bigoplus_{i\in\mathbb Z/p\mathbb Z}\mathcal A_{i,\mathrm{red}}=\bigoplus_{i=0}^{p-1}\bigoplus_{s\in\mathbb Z}(s,\mathfrak A_{\lfloor\frac{i+ps}q\rfloor,\mathrm{red}}(K)),$$
each $\mathfrak A_{k,\mathrm{red}}$ appears exactly $|q|$ times.
It follows from Proposition~\ref{prop:RedIsom} that
$$\mathrm{rank}\:HF_{\mathrm{red}}(S^3_{p/q}(K))=|q|\cdot C,$$
whenever the conditions in the statement of the theorem are
satisfied. Note that the constant $C(K)$ is indeed the same as the
rank of $HF_{\mathrm{red}}(S^3_{p}(K))$ for $p>0$.
\end{proof}

\end{document}